\documentclass[fleqn,preprint,11pt]{elsarticle}

\makeatletter
\def\ps@pprintTitle{
 \let\@oddhead\@empty
 \let\@evenhead\@empty
 \def\@oddfoot{\centerline{\thepage}}%
 \let\@evenfoot\@oddfoot}
\makeatother

\usepackage[margin=3.5cm]{geometry} 

\usepackage{amsmath,amsthm,amsfonts,amssymb,mathrsfs,mathtools}
\usepackage{graphicx}
\usepackage{subfigure}
\usepackage{tabularx}

\usepackage[latin1]{inputenc}
\usepackage{caption} 
\captionsetup[table]{skip=10pt}

\usepackage[shortlabels]{enumitem}

\usepackage{mcode}
\usepackage{comment}

\usepackage{array,booktabs}
\usepackage{siunitx}
\setlength\tabcolsep{2pt}

\graphicspath{{./figures/}}

\usepackage{algpseudocode}

\usepackage{algorithm}
\usepackage{algorithmicx}

\RequirePackage{xr-hyper}[6.00]
\RequirePackage{ifpdf}[2.3]
\RequirePackage{hyperref}[6.83]
\RequirePackage{xcolor}[2.11]
\colorlet{inlinkcolor}{green!50!black}
\colorlet{exlinkcolor}{red!50!black}
\hypersetup{
  colorlinks = true,
  allcolors = inlinkcolor,
  urlcolor = exlinkcolor,
}


\newtheorem{lemma}{Lemma}

\theoremstyle{definition}

\newcommand{\be}{\begin{equation}}
\newcommand{\ee}{\end{equation}}

\newcommand{\ba}{\begin{aligned}}
\newcommand{\ea}{\end{aligned}}

\newcommand{\bea}{\begin{eqnarray}}
\newcommand{\eea}{\end{eqnarray}}

\newcommand{\ngl}{n_{\rm gl}}
\newcommand{\nsub}{n_{\rm sub}}
\newcommand{\bR}{{\bf R}}
\newcommand{\bP}{{\bf P}}
\newcommand{\bK}{{\bf K}}

\newcommand{\bA}{{\bf A}}
\newcommand{\bM}{{\bf M}}
\newcommand{\bD}{{\bf D}}
\newcommand{\bU}{{\bf U}}
\newcommand{\bV}{{\bf V}}


\def\lund{Centre for Mathematical Sciences, Lund University\\
  Box 118, 221 00 Lund, Sweden}

\def\njit{Department of Mathematical Sciences,
  New Jersey Institute of Technology\\
  Newark, NJ 07102, USA}

\def\ccm{Center for Computational Mathematics, Flatiron Institute, Simons Foundation\\
  New York, NY 10010, USA}

\begin{document}

\begin{frontmatter}

\title{Solving Fredholm second-kind integral equations with singular
  right-hand sides on non-smooth boundaries}

\author[lund]{Johan Helsing}
\address[lund]{\lund}
\ead{johan.helsing@math.lth.se}

\author[njit,ccm]{Shidong Jiang}
\address[njit]{\njit}
\address[ccm]{\ccm}
\ead{shidong.jiang@njit.edu}

\begin{abstract}
   A numerical scheme is presented for the solution of Fredholm
   second-kind boundary integral equations with right-hand sides that
   are singular at a finite set of boundary points. The boundaries
   themselves may be non-smooth. The scheme, which builds on
   recursively compressed inverse preconditioning (RCIP), is universal
   as it is independent of the nature of the singularities. Strong
   right-hand-side singularities, such as $1/|r|^\alpha$ with $\alpha$
   close to $1$, can be treated in full machine precision. Adaptive
   refinement is used only in the recursive construction of the
   preconditioner, leading to an optimal number of discretization
   points and superior stability in the solve phase.
   The performance of
   the scheme is illustrated via several numerical examples, including
   an application to an integral equation derived from the linearized
   BGKW kinetic equation for the steady Couette flow.
\end{abstract}

\begin{keyword}
   integral equation method, singular right-hand side, non-smooth
   domain, RCIP method, linearized BGKW equation \MSC 31A10 \sep 45B05
   \sep 45E99 \sep 65R20
\end{keyword}

\end{frontmatter}

\section{Introduction}
\label{sec:intro}

Fredholm second-kind integral equations (SKIEs) have become standard
tools for solving boundary value problems of elliptic partial differential
equations~\cite{colton2012,hsiao2008,kress2014,martinsson2020,pozrikidis1992}.
Advantages include dimensionality reduction
in the solve phase, elimination of the need to impose artificial
boundary conditions for exterior problems, easily achieved
high-order discretization, and optimal complexity when coupled with
fast algorithms such as the fast multipole
method~\cite{greengard1987jcp}.

The present work is about the numerical solution to SKIEs of the form
\begin{equation}
  (I+K)\rho(r)=f(r)\,,\quad r\in\Gamma\,.
  \label{eq:start}
\end{equation}
 Here $r\in\mathbb{R}^2$ is a point in the plane; $I$ is the identity
operator; $\rho$ is an unknown layer density to be solved for;
$\Gamma$ is a piecewise smooth closed contour (boundary) with a finite
number of corners; $K$ is an integral operator on $\Gamma$ which is
compact away from the corners; and $f$ is a right-hand side which is
singular at a finite number of boundary points, which may or may not
coincide with the corner vertices, but otherwise smooth. The union of
corner vertices and boundary points where $f$ is singular is referred
to as {\it singular points}. These points are denoted $\gamma_j$,
$j=1,2,\ldots$. The kernel of $K$ is denoted $K(r,r')$. We also assume
a parameterization $r(s)$ of $\Gamma$ where $s$ is a parameter. 

We shall construct an efficient scheme for the numerical solution
of~(\ref{eq:start}). The difficulty in this undertaking is that the
singularities in $f$ and the non-compactness of $K$ at the $\gamma_j$
may require a very large number of unknowns for the resolution of
$\rho$. This, in turn, may lead to high computing costs and also to
artificial ill-conditioning and reduced achievable precision in
quantities computed from $\rho$.

The nature of the singularity of $f$ can be rather arbitrary in the
applications we consider. There is no analysis of $f$ involved in our
work and neither is there any further analysis of $K$. We only assume
that $f$ can be evaluated everywhere at $\Gamma$ except for at the
$\gamma_j$.  If, however, it is known whether the leading singular
behavior of $f$ is homogeneous on $\Gamma$ in the shape of a wedge and
whether $K$ is scale invariant on such $\Gamma$, that information can
be used to further improve the performance of our scheme.

We shall solve~(\ref{eq:start}) numerically using Nystr{\" o}m
discretization based on underlying composite $16$-point Gauss-Legendre
quadrature and the parameterization $r(s)$ and then accelerate and
stabilize the solution process using an extended version of the
recursively compressed inverse preconditioning (RCIP) method~\cite{Hels18}.
The discretization is chiefly done on a coarse mesh with quadrature panels
of approximately  equal  size. The coarse quadrature panels are
chosen so that the following holds: all singular points $\gamma_j$
coincide with panel endpoints; for $r$ on a panel close to $\gamma_j$
and $r'$ away from $\gamma_j$, $K(r(s),r')$ is smooth; for $r$ away
from $\gamma_j$ and $r'$ on a panel close to $\gamma_j$, $K(r,r'(s))$
is smooth.

The RCIP method assumes that the SKIE has a panelwise smooth
right-hand side and consists of the following steps:
\begin{itemize}
\item Transform the SKIE at hand into a form where the layer density
  to be solved for is panelwise smooth.
\item Use Nystr{\" o}m discretization to discretize the transformed SKIE on
  a grid on a fine mesh, obtained from the coarse mesh by repeated
  subdivision of the panels closest to each $\gamma_j$.
\item Compress the transformed and discretized SKIE so that it can be
  solved on a grid on the coarse mesh without the loss of information.
  This involves the use of a forward recursion.
\item Solve the compressed equation.
\item Reconstruct the solution to the original SKIE from the solution
  to the compressed equation. This involves the use of a backward
  recursion.
\end{itemize}

In this paper, we extend the RCIP method to treat {\it singular}
right-hand sides. The method is {\it universal} since algorithmic steps are
completely independent of the nature of the singularities in the
right-hand-side function. The only information needed is the locations
of singularities. Very strong singularities can be treated in full machine
precision. Indeed, we have studied singularities of the form $1/|r|^\alpha$
for a wide range of $\alpha$ and our method works very well even for
$\alpha=1+0.3{\rm i}$.

We further observe that problems involving sources close to corners
occur surprisingly often in computational electromagnetics and
computational fluid dynamics.  Examples include the determination of
radiation patterns from 5G base stations placed at street
corners~\cite{rappa17} and singularity formation in Hele--Shaw flows
driven by multipoles~\cite{nie2001siap}. These problems involve {\it
nearly singular} right-hand sides and can be treated easily by the
method developed in this paper. The 5G base stations often have
antennas with logarithmic singularities in their radiated field.

Examples of problems involving singular sources close to smooth
surfaces can be found in the area of Internet of Things (IoT), where
the radiation patterns from antennas, integrated in devices, again
need to be found numerically for design purposes~\cite{Devi18}. The
most common types of IoT antennas are stripline and patch antennas.
These have a radiating current density very close to a metallic ground
plane, which often is a smooth part of the housing of the device. As
IoT carrier frequencies get higher, antennas become smaller and the
distance between the current and the housing surface shrinks --
leading to increased need for computational resolution.

The paper is organized as follows.
In Section~\ref{sec:transform}, a new transformation is introduced to treat
the singular right-hand side $f$. Sections~\ref{sec:disc}, \ref{sec:forwrecur},
\ref{sec:hat}, and \ref{sec:backrecur} present detailed modifications to the
other steps in the list that follow from the introduction of the new transform.
Numerical examples are presented in Section~\ref{sec:numerical}. Finally,
we discuss the application of our new method to the integral equation
derived from the linearized Bhatnagar-Gross-Krook-Welander (BGKW) kinetic
equation~\cite{bgk1954,w1954} for the steady Couette flow.

In order to keep the presentation concise, we concentrate on new development and
refer the reader to the recently updated compendium~\cite{Hels18} for
details on the RCIP method. This compendium, in turn, contains
references to original journal papers.

\section{Transformed equation with smooth density}
\label{sec:transform}
This section reviews the original RCIP transformation, discusses its
shortcomings for singular right-hand sides $f$, and presents a new
and better transformation. We frequently use the concept of
{\it panelwise smooth functions}. By this we mean functions which can be
well approximated by polynomials of degree $15$ in $s$ on individual
quadrature panels. We also introduce the boundary subsets
$\Gamma^{j\star}$, which refer to the four panels that are closest to
a point $\gamma_j$ (two on each side), and the boundary subsets
$\Gamma^{j\star\star}$, which refer to the two panels that are
closest to a point $\gamma_j$ (one on each side).

\subsection{The original transformation}
\label{sec:orig}

The original RCIP transformation for SKIEs of the
form~(\ref{eq:start}) assumes that $f$ is a panelwise smooth function
and relies on a kernel split
\begin{equation}
K(r,r')=K^\star(r,r')+K^\circ(r,r')\,,\quad r,r'\in\Gamma\,,
\label{eq:splitKK}
\end{equation}
and a corresponding operator split 
\begin{equation}
K=K^\star+K^\circ\,.
\label{eq:splitK}
\end{equation}
In~(\ref{eq:splitK}), the operator $K^\star$ denotes the part of $K$
that accounts for self-interaction close to the corner vertices $\gamma_j$, and
$K^\circ$ is the compact remainder. The split~(\ref{eq:splitKK}) is
determined from a geometric criterion: if $r$ and $r'$ both are in
$\Gamma^{j\star}$ for some $j$, then $K^\star(r,r')=K(r,r')$.
Otherwise $K^\star(r,r')$ is zero.

The change of variables
\begin{equation}
\rho(r)=(I+K^\star)^{-1}\tilde{\rho}(r)\,,\quad r\in\Gamma\,,
\label{eq:transrho}
\end{equation}
makes~(\ref{eq:start}) with~(\ref{eq:splitK}) assume the form
\begin{equation}
(I+K^\circ(I+K^\star)^{-1})\tilde{\rho}(r)=f(r)\,,\quad r\in\Gamma\,.
\label{eq:prec1}
\end{equation}
 When $f$ is panelwise smooth, the transformed layer density
$\tilde{\rho}$ in~(\ref{eq:prec1}) will, loosely speaking, also be
panelwise smooth. This is so since the action of $K^\circ$ on any
function results in a panelwise smooth function. 
In particular,
$\tilde{\rho}$ will be smooth on $\Gamma^{j\star\star}$. The
panelwise smoothness of $\tilde{\rho}$ on $\Gamma^{j\star\star}$,
inherited from $f$, is the key property which makes the transformed
equation~(\ref{eq:prec1}) efficient for the original
problem~(\ref{eq:start}). The efficiency  comes from the fact that 
a panelwise smooth unknown is easy to resolve by panelwise polynomials.

\subsection{A new transformation}
 When the right-hand side $f$ in~(\ref{eq:start}) is not panelwise smooth,
but has singularities at the corner vertices, the transformed layer
density $\tilde{\rho}$ in~(\ref{eq:prec1}) is not panelwise smooth
either. 
In order to fix this problem we now propose a new transformation of~(\ref{eq:start})
which, in addition to the split of $K$, also splits the right-hand side $f$
and the unknown $\rho$ as
\begin{align}
f(r)&=f^\star(r)+f^\circ(r)\,,\quad r\in\Gamma\,,
\label{eq:splitf}\\
\rho(r)&=v(r)+g(r)\,,\quad r\in\Gamma\,.
\label{eq:rvg}
\end{align}
Here $f^\star(r)=f(r)$ if $r\in\Gamma^{j\star}$ for some $j$.
Otherwise $f^\star(r)$ is zero. The functions $v$ and $g$ are given by
\begin{align}
v(r)&=(I+K^\star)^{-1}\tilde{v}(r)\,,
\label{eq:transv}\\
g(r)&=(I+K^\star)^{-1}f^\star(r)\,,
\label{eq:transg}
\end{align}
where $\tilde{v}$ is a new unknown transformed layer density.

Use of~(\ref{eq:splitK}), (\ref{eq:splitf}), (\ref{eq:rvg}),
(\ref{eq:transv}), and~(\ref{eq:transg}) makes~(\ref{eq:start})
assume the form
\begin{equation}
(I+K^\circ(I+K^\star)^{-1})\tilde{v}(r)=
f^\circ(r)-K^\circ(I+K^\star)^{-1}f^\star(r)\,,\quad r\in\Gamma\,.
\label{eq:prec2}
\end{equation}
One can see, in~(\ref{eq:prec2}), that $\tilde{v}$ is panelwise
smooth on $\Gamma^{j\star\star}$. This is so since the right-hand
side of~(\ref{eq:prec2}) is smooth on $\Gamma^{j\star}$.

We remark that if $\Gamma$ is smooth so that there are no corners,
only singularities in $f$, then the local
transformation~(\ref{eq:transv}) is not needed and~(\ref{eq:prec2})
reduces to
\begin{equation}
  (I+K)v(r)=f^\circ(r)-K^\circ(I+K^\star)^{-1}f^\star(r)\,,\quad
  r\in\Gamma\,.
  \label{eq:prec3}
\end{equation}
One can also imagine mixed situations where~(\ref{eq:transv}) is used
only at those $\gamma_j$ which correspond to corner vertices.

\section{Discretization of~\eqref{eq:prec1} and~\eqref{eq:prec2}}
\label{sec:disc}

\begin{figure}[t]
\centering 
\includegraphics[height=28mm]{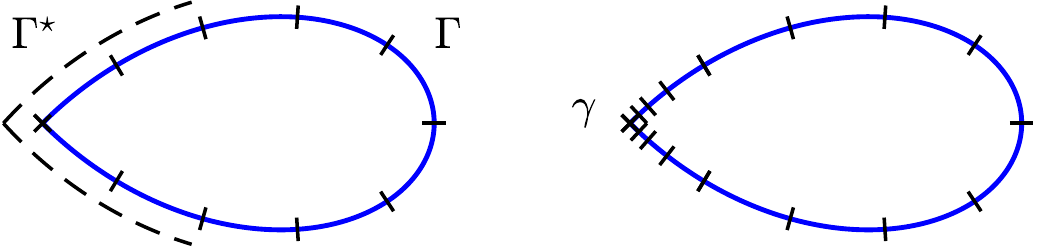}
\caption{\sf A contour $\Gamma$ with a corner at $\gamma$ of opening 
  angle $\theta=\pi/2$. Left: A coarse mesh with ten quadrature panels
  on $\Gamma$. A subset of $\Gamma$, called $\Gamma^\star$, covers the
  four coarse panels closest to $\gamma$. Right: A fine mesh created
  from the coarse mesh by subdividing the panels closest to $\gamma$ a
  number $n_{\rm sub}=3$ of times.}
\label{fig:sno2}
\end{figure}

This section summarizes the modifications needed in the RCIP method in
order for it to apply to~(\ref{eq:prec2}) rather than
to~(\ref{eq:prec1}). We first show how RCIP is applied
to~(\ref{eq:prec1}), with detailed references to~\cite{Hels18}, and
then state the modifications needed for~(\ref{eq:prec2}). For
simplicity of presentation it is from now on assumed that there is
only one singular point, denoted $\gamma$, with an associated
four-panel neighboring zone denoted $\Gamma^\star$ as illustrated in
Figure~\ref{fig:sno2}. We let $\Gamma^{\star\star}$ refer to the two
panels closest to $\gamma$ (one on each side).

The discretization of~(\ref{eq:prec1}) takes place on two different
meshes: on the coarse mesh and on a fine mesh. The fine mesh is
constructed from the coarse mesh by $n_{\rm sub}$ times subdividing
the panels closest to $\gamma$. See the right image of
Figure~\ref{fig:sno2} for an example. Discretization points on the
coarse mesh constitute the {\it coarse grid}. Points on the fine mesh
constitute the {\it fine grid}. The number of refinement levels,
$n_{\rm sub}$, is chosen so that the operator $(I+K^\star)^{-1}$
in~(\ref{eq:prec1}) is resolved to a desired precision.

As mentioned in Section~\ref{sec:intro}, our Nystr{\" o}m discretization
relies on composite 16-point Gauss--Legendre quadrature. This means
that an integral
\begin{displaymath}
\int_\Gamma h(r)\,{\rm d}\ell\,,
\end{displaymath}
where $h$ is a smooth function and ${\rm d}\ell$ is an element of arc
length, can be approximated by a sum on the coarse grid
\begin{equation}
\int_\Gamma h(r)\,{\rm d}\ell
\approx\sum_j h(r(s_{{\rm coa}_j}))
\lvert\dot{r}(s_{{\rm coa}_j})\rvert w_{{\rm coa}_j}\,.
\label{eq:basdisc}
\end{equation}
Here $\dot{r}(s)={\rm d}r(s)/{\rm d}s$ denotes differentiation with respect
to the boundary parameter $s$ and $w_{{\rm coa}_j}$ are appropriately scaled
Gauss--Legendre weights. 
A formula, analogous
to~(\ref{eq:basdisc}) but with subscripts {\footnotesize coa} replaced
with subscripts {\footnotesize fin}, holds when $h$ is a singular
function that needs the fine grid for resolution.

\subsection{Smooth right-hand side}

It is shown in~\cite[Appendix B]{Hels18} that the discretization
of~(\ref{eq:prec1}) on the fine grid followed by RCIP-style
compression leads to the system
\begin{equation}
\left({\bf I}_{\rm coa}+{\bf K}_{\rm coa}^\circ{\bf R}\right)
\tilde{\boldsymbol{\rho}}_{\rm coa}={\bf f}_{\rm coa}\,.
\label{eq:disc1}
\end{equation}
Here ${\bf R}$ is a block-diagonal matrix defined
as~\cite[Eq.~(26)]{Hels18}
\begin{equation}
{\bf R}={\bf P}_W^{\rm T}
\left({\bf I}_{\rm fin}+{\bf K}_{\rm fin}^\star\right)^{-1}
{\bf P}\,,
\label{eq:R}
\end{equation}
and the subscripts {\footnotesize coa} and {\footnotesize fin} indicate what
type of mesh is used for discretization. The prolongation matrix ${\bf P}$
interpolates piecewise polynomial functions known at the coarse grid
to the fine grid. The weighted prolongation matrix ${\bf P}_W$
resembles ${\bf P}$, but is designed to act on discretized functions
multiplied by quadrature weights~\cite[Eq.~(21)]{Hels18}. The
interpolation is done with respect to the boundary parameter $s$. The
superscript {\footnotesize T} denotes the transpose.

\subsection{Singular right-hand side}

The discretization and compression of~(\ref{eq:prec2}) can be
constructed in a manner completely analogous to that
of~(\ref{eq:prec1}) and leads to the system
\begin{equation}
({\bf I}_{\rm coa}+{\bf K}_{\rm coa}^\circ{\bf R})
\tilde{\bf v}_{\rm coa}=
{\bf f}^\circ_{\rm coa}
-{\bf K}_{\rm coa}^\circ {\bf R}_f{\bf f}^\star_{\rm coa}\,,
\label{eq:disc2}
\end{equation}
where ${\bf R}$ is as in~(\ref{eq:R}) and
\begin{equation}
{\bf R}_f=
{\bf P}_W^{\rm T}
\left({\bf I}_{\rm fin}+{\bf K}_{\rm fin}^\star\right)^{-1}
{\bf P}_f\,.
\label{eq:Rf}
\end{equation}
The prolongation matrix ${\bf P}_f$ interpolates $f$ from the coarse
grid to the fine grid and can easily be constructed from computed
values of $f$ on the two grids respectively. The matrix ${\bf P}_f$ is
block-diagonal with blocks being either identity matrices or, for
entries corresponding to discretization points on
$\Gamma^{\star\star}$, the rectangular rank-one matrix
\begin{equation}
\bP_f^{\star\star}= \frac{1}
   {{\bf f}_{\rm coa}^{\star\star{\rm H}}{\bf f}_{\rm coa}^{\star\star}}
   {\bf f}^{\star\star}_{\rm fin}{\bf f}^{\star\star{\rm H}}_{\rm coa}\,.
\end{equation}
Here ${\bf f}_{\rm fin}^{\star\star}$ is a column vector with values
of $f$ on the fine grid on $\Gamma^{\star\star}$,
${\bf f}_{\rm coa}^{\star\star}$ is a column vector with values of $f$ on the
coarse grid on $\Gamma^{\star\star}$, and {\footnotesize H} denotes
the conjugate transpose. Clearly,
$\bP_f^{\star\star}{\bf f}_{\rm coa}^{\star\star}={\bf f}_{\rm fin}^{\star\star}$.

\section{Forward recursion for ${\bf R}$ and ${\bf R}_f$}
\label{sec:forwrecur}

\begin{figure}[t]
\centering 
\includegraphics[height=31mm]{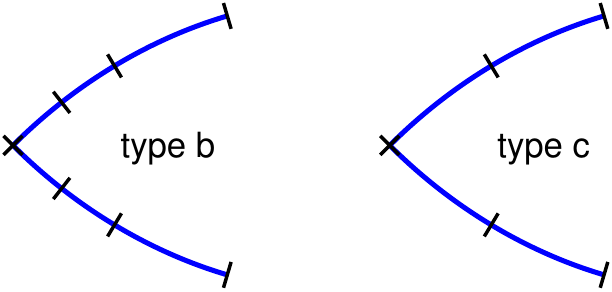}
\includegraphics[height=31mm]{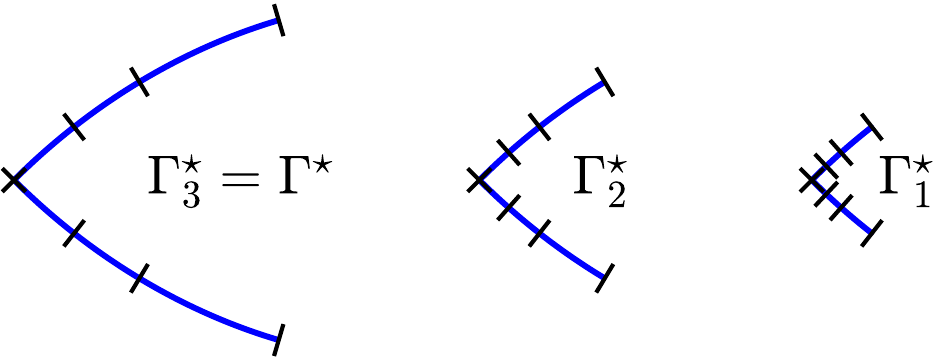}
\caption{\sf 
  Top row: meshes of type {\sf b} and type {\sf c} on the boundary
  subset $\Gamma^\star$. The type {\sf b} mesh has six panels. The type
  {\sf c} mesh has four panels. Bottom row: the boundary subsets
  $\Gamma_3^\star=\Gamma^\star$, $\Gamma_2^\star$, and
  $\Gamma_1^\star$ along with their corresponding type {\sf b} meshes
  for $n_{\rm sub}=3$.}
\label{fig:typasub}
\end{figure}

The matrices ${\bf R}$ and ${\bf R}_f$ of~(\ref{eq:R})
and~(\ref{eq:Rf}) differ from the identity matrix only in that they
each contain a non-trivial $64\times 64$ diagonal block, associated
with entries corresponding to discretization points on $\Gamma^\star$.
These diagonal blocks can be efficiently constructed via recursions
relying on the discretization of $K$ on small meshes on a hierarchy of
boundary subsets $\Gamma_i^\star$ around $\gamma$. Central to the
recursions are two types of overlapping meshes called type {\sf b} and
type {\sf c}. The type {\sf b} mesh contains six quadrature panels.
The type {\sf c} mesh contains four quadrature panels. See
Figure~\ref{fig:typasub} for an illustration
and~\cite[Section~7]{Hels18} for more information.

The recursions for ${\bf R}$ and ${\bf R}_f$, given below, use local
prolongation matrices ${\bf P}_{\rm bc}$, ${\bf P}_{W{\rm bc}}$, and
${\bf P}_{fi{\rm bc}}$ similar to the global matrices ${\bf P}$, ${\bf P}_W$
and ${\bf P}_f$ of Section~\ref{sec:disc}. The matrix ${\bf P}_{\rm bc}$
performs polynomial interpolation from a grid on a type
{\sf c} mesh to a grid on a type {\sf b} mesh on the same level. The
matrix ${\bf P}_{fi{\rm bc}}$ interpolates $f$ from a grid on a type
{\sf c} mesh to a grid on a type {\sf b} mesh on the same level. The
subscript {\footnotesize $i$} indicates that ${\bf P}_{fi{\rm bc}}$
depends on the hierarchical level in which it appears.

The matrices ${\bf P}_{\rm bc}$ and ${\bf P}_{W{\rm bc}}$ are level
independent. Their construction is described
in~\cite[Section~7.1]{Hels18}. The matrix ${\bf P}_{fi{\rm bc}}$ is
constructed analogously to the matrix ${\bf P}_f$ of
Section~\ref{sec:disc}.

\subsection{The recursion for ${\bf R}$}

It is shown in~\cite[Appendix D]{Hels18} that the non-trivial diagonal
$64\times 64$ block of ${\bf R}$ can be obtained from the recursion
\begin{equation}
{\bf R}_i={\bf P}^T_{W\rm{bc}}
\left(
\mathbb{F}\{{\bf R}_{i-1}^{-1}\}+{\bf I}_{\rm b}^\circ+{\bf K}_{i{\rm
    b}}^\circ\right)^{-1}{\bf P}_{\rm{bc}}\,,
\quad i=1,\ldots,n_{\rm sub}\,,
\label{eq:finalR}
\end{equation}
with the initializer
\begin{equation}
\mathbb{F}\{{\bf R}_0^{-1}\}=
{\bf I}_{\rm b}^\star+{\bf K}^\star_{1{\rm b}}\,.
\label{eq:Rstart}
\end{equation}
Here ${\bf K}_{i{\rm b}}$ is the discretization of $K$ on a type {\sf b}
mesh on level $i$ in the hierarchy of local meshes around
$\gamma$. The operator $\mathbb{F}\{\cdot\}$ expands its matrix
argument by zero-padding (adding a frame of zeros of width 16 around
it). The superscripts {\footnotesize $\star$} and {\footnotesize $\circ$}
denote matrix splits analogous to the split~(\ref{eq:splitK}).

The forward recursion~(\ref{eq:finalR}) starts at the finest
refinement level, $i=1$, and ascends through the hierarchy of levels
until it reaches the coarsest level $i=n_{\rm sub}$. The matrix
${\bf R}_{n_{\rm sub}}$ is equal to the non-trivial $64\times 64$ block of
${\bf R}$.

\subsection{The recursion for ${\bf R}_f$}

A recursion for the non-trivial $64\times 64$ block of ${\bf R}_f$ can
be derived in complete analogy with the derivation
of~(\ref{eq:finalR}). See \ref{sec:rfderiv} for the key steps of the
derivation. The result is
\begin{multline}
{\bf R}_{fi}={\bf P}_{W{\rm bc}}^T
\left(\mathbb{F}\{{\bf R}_{i-1}^{-1}\}+{\bf I}_{\rm b}^\circ+
{\bf K}_{i{\rm b}}^\circ\right)^{-1}
\left(\mathbb{F}\{{\bf R}_{i-1}^{-1}{\bf R}_{f(i-1)}\}
+{\bf I}_{\rm b}^\circ\right)
{\bf P}_{fi{\rm bc}}\,,\\
\quad i=1,\ldots,n_{\rm sub}\,.
\label{eq:finalRf}
\end{multline}
The recursion~(\ref{eq:finalRf}) can be initialized with
\begin{equation}
{\bf R}_{f0}={\bf R}_0
\label{eq:Rfstart}
\end{equation}
and run in tandem with~(\ref{eq:finalR}).

\subsection{Further improvement of the recursions}
\label{sec:improvement}
From~(\ref{eq:disc2}) it is evident that the matrix ${\bf R}_f$ acts
only on one particular known vector, namely on ${\bf f}^\star_{\rm coa}$.
Therefore, rather than first finding ${\bf R}_f$
via~(\ref{eq:finalRf}) and then computing the vector
\begin{equation}
{\bf r}_f^\star={\bf R}_f{\bf f}^\star_{\rm coa}\,,
\end{equation}
one can instead modify~(\ref{eq:finalRf}) with~(\ref{eq:Rfstart}) so
that it produces ${\bf r}_f^\star$ directly
\begin{gather}
{\bf r}_{fi}^\star={\bf P}_{W{\rm bc}}^T
\left(\mathbb{F}\{{\bf R}_{i-1}^{-1}\}+{\bf I}_{\rm b}^\circ+
{\bf K}_{i{\rm b}}^\circ\right)^{-1}
\left(\mathbb{F}\{{\bf R}_{i-1}^{-1}{\bf r}_{f(i-1)}^\star\}
+{\bf f}_{i{\rm b}}^\circ\right)\,,\nonumber\\
\hspace{3.5in} i=1,\ldots,n_{\rm sub}\,,\label{eq:finalRfvec}\\
{\bf r}_{f0}^\star={\bf R}_{f0}{\bf f}_{1{\rm b}}^\star\,.
\label{eq:Rfvecstart}
\end{gather}
The recursion~(\ref{eq:finalRfvec}) with~(\ref{eq:Rfvecstart}) is a
bit faster than~(\ref{eq:finalRf}) with~(\ref{eq:Rfstart}).

It is also worth noting that all three recursions~(\ref{eq:finalR}),
(\ref{eq:finalRf}), and (\ref{eq:finalRfvec}) can be implemented
without the explicit inversion of ${\bf R}_{i-1}$. The key to this is
to use the Schur--Banachiewicz inverse formula for partitioned
matrices~\cite[Eq.~(8)]{Hend81}. Avoiding inversion is particularly
important when ${\bf R}_{i-1}$ is ill conditioned. Details on an
implementation for~(\ref{eq:finalR}), free from inversion of
${\bf R}_{i-1}$, are given in~\cite[Section~8]{Hels18}
(see also~\ref{sec:Frecursiondetail}).

\subsection{Efficient initializers}

A minor problem with the recursions~(\ref{eq:finalR}),
(\ref{eq:finalRf}), and (\ref{eq:finalRfvec}) is that it could be
difficult to determine a suitable recursion length $n_{\rm sub}$ a
priori. A too large $n_{\rm sub}$ leads to unnecessary work. A too
small $n_{\rm sub}$ fails to resolve the problem. Should it, however,
happen that $K$ is scale-invariant on wedges, there may be an easy way
around this problem. The key observation is that for large $n_{\rm sub}$
and at levels $i$ such that $n_{\rm sub}-i\gg 1$, the matrices
${\bf K}^\circ_{i{\rm b}}$ have often converged to a matrix ${\bf K}^\circ_{\rm b}$
(in double precision arithmetic) that is
independent of $i$. This, typically, happens for $n_{\rm sub}-i>60$
and means that~(\ref{eq:finalR}) assumes the form of a fixed-point
iteration
\begin{equation}
{\bf R}_{\ast i}={\bf P}^T_{W\rm{bc}}
\left(
\mathbb{F}\{{\bf R}_{\ast(i-1)}^{-1}\}+{\bf I}_{\rm b}^\circ+{\bf K}_{\rm
    b}^\circ\right)^{-1}{\bf P}_{\rm{bc}}\,,
\quad i=1,2,\ldots\,.
\label{eq:fixedR}
\end{equation}
It also means that all ${\bf R}_i$ in~(\ref{eq:finalR}) are the same
for $n_{\rm sub}-i\gg 1$. In view of the above one can replace the
initializer ${\bf R}_0$ of~(\ref{eq:Rstart}) with the fixed-point
matrix ${\bf R}_\ast$ obtained by running~(\ref{eq:fixedR}) until
convergence. With the choice ${\bf R}_0={\bf R}_\ast$, it is enough to
take $n_{\rm sub}=60$ steps in the recursion~(\ref{eq:finalR}). See,
further, the discussion in~\cite[Sections~12--13]{Hels18}.

In a procedure similar to that just described, it is also possible to
replace the initializers ${\bf R}_{f0}$ and ${\bf r}_{f0}^\star$
of~(\ref{eq:Rfstart}) and~(\ref{eq:Rfvecstart}) with more efficient
initializers. The requirements are, in addition to that $K$ is scale
invariant on wedges, that the leading singular behavior of $f$ is
homogeneous on wedges. If this holds, and if $n_{\rm sub}-i\gg 1$,
then~(\ref{eq:finalRf}) assumes the form of a linear fixed-point
iteration
\begin{multline}
{\bf R}_{f\ast i}={\bf P}_{W{\rm bc}}^T
\left(\mathbb{F}\{{\bf R}_\ast^{-1}\}+{\bf I}_{\rm b}^\circ+
{\bf K}_{\rm b}^\circ\right)^{-1}
\left(\mathbb{F}\{{\bf R}_\ast^{-1}{\bf R}_{f\ast(i-1)}\}
+{\bf I}_{\rm b}^\circ\right)
{\bf P}_{f{\rm bc}}\,,\\
\quad i=1,2,\ldots\,.
\label{eq:fixedRf}
\end{multline}
The fixed-point matrix ${\bf R}_{f\ast}$ can be found with direct
methods solving a Sylvester equation. With the choices
${\bf R}_{f0}={\bf R}_{f\ast}$ and
${\bf r}_{f0}^\star={\bf R}_{f\ast}{\bf f}_{1{\rm b}}^\star$
it is enough to take $n_{\rm sub}=60$ steps in
the recursions~(\ref{eq:finalRf}) and~(\ref{eq:finalRfvec}).

We remark that efficient initializers can be found also under more
general conditions on $K$ than scale invariant on wedges.
See~\cite[Section~5.3]{HelsJian18}, for an example.

\section{Computing integrals of $\rho$}
\label{sec:hat}

Often, in applications, one is not primarily interested in the
solution $\rho$ to~(\ref{eq:start}) in itself. Rather, one is
interested in computing functionals of $\rho$ of the type
\begin{equation}
q=\int_\Gamma h(r)\rho(r)\,{\rm d}\ell\,,
\label{eq:q0}
\end{equation}
where $h(r)$ is a smooth function. The RCIP method offers an elegant
way to do this that only involves quantities appearing in the
compressed equations~(\ref{eq:disc1}) and~(\ref{eq:disc2}).

Assume that $f$ is smooth and consider~(\ref{eq:disc1}). The quantity
$q$ of~(\ref{eq:q0}) can then be well approximated by the sum on the
coarse grid
\begin{equation}
q\approx\sum_j h(r(s_{{\rm coa}_j}))\hat{\rho}_{{\rm coa}_j}
\lvert\dot{r}(s_{{\rm coa}_j})\rvert w_{{\rm coa}_j}\,,
\label{eq:q1}
\end{equation}
where $\hat{\rho}_{{\rm coa}_j}$ are elements of the {\it
  weight-corrected} density vector
\begin{equation}
\hat{\boldsymbol{\rho}}_{\rm coa}
={\bf R}\tilde{\boldsymbol{\rho}}_{\rm coa}\,.
\label{eq:rhotilde1}
\end{equation}
See~\cite[Appendix C]{Hels18} for a proof.

The situation for a singular $f$ and~(\ref{eq:disc2}) is completely
analogous. The expression~(\ref{eq:q1}) holds
with~(\ref{eq:rhotilde1}) replaced by
\begin{equation}
\hat{\boldsymbol{\rho}}_{\rm coa}
={\bf R}\tilde{\bf v}_{\rm coa}+{\bf r}_f^\star\,.
\label{eq:rhotilde2}
\end{equation}

\section{Backward recursion for the reconstruction of $\rho$}
\label{sec:backrecur}

When the compressed equation~(\ref{eq:disc2}) has been solved for
$\tilde{\bf v}_{\rm coa}$ one might also be interested in the
reconstruction of the discretized solution 
\begin{equation}
\boldsymbol{\rho}_{\rm fin}={\bf v}_{\rm fin}+{\bf g}_{\rm fin}
\label{eq:fff}
\end{equation}
to the original SKIE~(\ref{eq:start}), compare~(\ref{eq:rvg}). Such a
reconstruction can be achieved by, loosely speaking, running the
recursions~(\ref{eq:finalR}) and~(\ref{eq:finalRf}) backward on
$\Gamma^\star$. Outside of $\Gamma^\star$, the coarse grid and the
fine grid coincide and it holds that $\boldsymbol{\rho}={\bf v}=\tilde{\bf v}$,
see~(\ref{eq:rvg}), (\ref{eq:transv}), and (\ref{eq:transg}).

\subsection{The recursion for ${\bf v}_{\rm fin}$}

We first review the reconstruction of $\boldsymbol{\rho}_{\rm fin}$
from the solution $\tilde{\boldsymbol{\rho}}_{\rm coa}$
to~(\ref{eq:disc1}). The mechanism for this reconstruction was
originally derived in~\cite[Section~7]{Hels09JCP} and is also
summarized in~\cite[Section 10]{Hels18}. The backward recursion reads
\be
\vec{\boldsymbol{\rho}}_{{\rm coa},i}=
\left[{\bf I}_{\rm b}-{\bf K}_{i{\rm b}}^\circ
\left(\mathbb{F}\{{\bf R}_{i-1}^{-1}\}+
{\bf I}_{\rm b}^\circ+{\bf K}_{i{\rm b}}^\circ
\right)^{-1}\right]{\bf P}_{\rm bc}\tilde{\boldsymbol{\rho}}_{{\rm coa},i}\,,
\quad i=n_{\rm sub},\ldots,1\,.
\label{eq:backrho}
\ee
Here $\tilde{\boldsymbol{\rho}}_{{\rm coa},i}$ is a column vector with
$64$ elements. In particular, $\tilde{\boldsymbol{\rho}}_{{\rm coa},n_{\rm sub}}$
is the restriction of
$\tilde{\boldsymbol{\rho}}_{\rm coa}$ to $\Gamma^\star$, while
$\tilde{\boldsymbol{\rho}}_{{\rm coa},i}$ are taken as elements
$\{17:80\}$ of $\vec{\boldsymbol{\rho}}_{{\rm coa},i+1}$ for $i<n_{\rm sub}$.
The elements $\{1:16\}$ and $\{81:96\}$ of
$\vec{\boldsymbol{\rho}}_{{\rm coa},i}$ are the reconstructed values
of $\boldsymbol{\rho}_{\rm fin}$ on the outermost panels of a type
{\sf b} mesh on $\Gamma_i^\star$. 

When the recursion is completed, there are no values assigned to
$\boldsymbol{\rho}_{\rm fin}$ at points on the four innermost panels
(on $\Gamma_1^\star$ closest to $\gamma$) on the fine grid.
Reconstructed weight-corrected values of $\boldsymbol{\rho}_{\rm fin}$
on these panels can then be used, rather than true values, and are
obtained from
\begin{equation}
{\bf R}_0\tilde{\boldsymbol{\rho}}_{{\rm coa},0}\,.
\label{eq:recendR}
\end{equation}

We now observe that the reconstruction of ${\bf v}_{\rm fin}$ from the
solution $\tilde{\bf v}_{\rm coa}$ to~(\ref{eq:disc2}) is identical to
the reconstruction just described. This is so since both
$\tilde{\rho}$ of~(\ref{eq:prec1}) and $\tilde{v}$ of~(\ref{eq:prec2})
are panelwise smooth functions.

\subsection{The recursion for ${\bf g}_{\rm fin}$}

The backward recursion for ${\bf g}_{\rm fin}$ from ${\bf f}_{\rm coa}^\star$
is analogous to~(\ref{eq:backrho}), but the vector
$\vec{\bf g}_{{\rm coa},i}$, corresponding to
$\vec{\boldsymbol{\rho}}_{{\rm coa},i}$ in~(\ref{eq:backrho}), needs a
split in a singular and a panelwise smooth part on a type {\sf b} mesh
on each $\Gamma_i^\star$
\begin{equation}
\vec{\bf g}_{{\rm coa},i}
=\vec{\bf g}_{{\rm coa},i}^{\,{\rm smo}}+{\bf f}_{i{\rm b}}\,.
\label{eq:fg}
\end{equation}
The backward recursion can then be written
\begin{multline}
\vec{\bf g}_{{\rm coa},i}^{\,{\rm smo}}=
\left[{\bf I}_{\rm b}-{\bf K}_{i{\rm b}}^\circ
\left(\mathbb{F}\{{\bf R}_{i-1}^{-1}\}+
{\bf I}_{\rm b}^\circ+{\bf K}_{i{\rm b}}^\circ
\right)^{-1}\right]{\bf P}_{\rm bc}\tilde{\bf g}_{{\rm coa},i}^{\,{\rm smo}}\\
-{\bf K}_{i{\rm b}}^\circ
\left(\mathbb{F}\{{\bf R}_{i-1}^{-1}\}+{\bf I}_{\rm b}^\circ+
{\bf K}_{i{\rm b}}^\circ\right)^{-1}
\left(\mathbb{F}\{{\bf R}_{i-1}^{-1}{\bf r}_{f(i-1)}^\star\}
+{\bf f}_{i{\rm b}}^\circ\right)\,,\quad i=n_{\rm sub},\ldots,1\,.
\label{eq:backg2}
\end{multline}
Here $\tilde{{\bf g}}_{{\rm coa},i}^{\,{\rm smo}}$ is a column vector
with $64$ elements. In particular,
$\tilde{{\bf g}}_{{\rm coa},n_{\rm sub}}^{\,{\rm smo}}={\bf 0}$ while
$\tilde{{\bf g}}_{{\rm coa},i}^{\,{\rm smo}}$ are taken as elements $\{17:80\}$ of
$\vec{{\bf g}}_{{\rm coa},i+1}$ for $i<n_{\rm sub}$. The elements
$\{1:16\}$ and $\{81:96\}$ of $\vec{{\bf g}}_{{\rm coa},i}$
in~(\ref{eq:fg}) are the reconstructed values of ${\bf g}_{\rm fin}$
on the outermost panels of a type {\sf b} mesh on $\Gamma_i^\star$.
The reconstructed weight-corrected values of ${\bf g}_{\rm fin}$ on
the four innermost panels on the fine grid are obtained from
\begin{equation}
{\bf R}_0\tilde{{\bf g}}_{{\rm coa},0}^{\,{\rm smo}}+{\bf r}_{f0}^\star\,.
\label{eq:recendT}
\end{equation}

\section{Numerical examples}
\label{sec:numerical}
We now demonstrate the efficiency of our numerical scheme
for~(\ref{eq:start}). The scheme consists of the compressed
equation~(\ref{eq:disc2}), the recursions~(\ref{eq:finalR}),
(\ref{eq:finalRfvec}), (\ref{eq:backrho}), and (\ref{eq:backg2}), and
initializers obtained from~(\ref{eq:fixedR}) and~(\ref{eq:fixedRf}).
The code is implemented in {\sc Matlab}, release 2020b, and executed
on a 64 bit Linux laptop with a 2.10GHz Intel i7-4600U CPU.
The implementations are standard and rely on built-in functions such as
{\tt dlyap} (SLICOT subroutine SB04QD), for the Sylvester equation.
Large linear systems are solved using GMRES, incorporating a
 low-threshold stagnation avoiding technique~\cite[Section~8]{Hels08} 
applicable to systems coming from discretizations of SKIEs.
The GMRES stopping criterion is set to machine
epsilon in the estimated relative residual.

\subsection{A transmission problem for Laplace's equation}
\subsubsection{Test equation and test geometry}

We solve the SKIE~(\ref{eq:start}) with an integral operator $K$
defined by its action on $\rho$ as
\begin{equation}
K\rho(r)=2\lambda\int_{\Gamma}\frac{\partial G}{\partial\nu}(r,r')
\rho(r')\,{\rm d}\ell'\,,\quad r\in \Gamma\,.
\label{eq:Kdef}
\end{equation}
Here $\lambda$ is a parameter set to $\lambda=0.5$, $\nu(r)$ is the
exterior unit normal at position $r$ on $\Gamma$,
$\partial/\partial\nu=\nu(r)\cdot\nabla$, $G(r,r')$ is the fundamental
solution to Laplace's equation in the plane
\begin{equation}
G(r,r')=-\frac{1}{2\pi}\log|r-r'|\,,
\end{equation}
and $\Gamma$ is the closed contour with a corner at $\gamma=0$
parameterized as
\begin{equation}
r(s)=\sin(\pi s)\left(\cos((s-0.5)\theta),\sin((s-0.5)\theta)\right)\,,
\quad s\in[0,1]\,,
\label{eq:gamma}
\end{equation}
where $\theta$ corresponds to the opening angle of the corner. With
the particular choice~(\ref{eq:Kdef}) for $K$, the
SKIE~(\ref{eq:start}) can model a transmission problem for Laplace's
equation~\cite[Section~4]{Hels18}.

In our experiments we shall use a coarse mesh that is sufficiently
refined as to resolve~(\ref{eq:start}) away from $\gamma$, vary the
number of distinct recursion steps $n_{\rm sub}$, and monitor the
convergence of the scalar quantity $q$ of~(\ref{eq:q0}) with $h(r)=1$.
For comparison we compute $q$ in two ways: first on the coarse grid
via~(\ref{eq:q1}) and with $\hat{\boldsymbol{\rho}}_{\rm coa}$
from~(\ref{eq:rhotilde2}), then on the fine grid via
\begin{equation}
q\approx\sum_j \rho_{{\rm fin}_j}\lvert\dot{r}(s_{{\rm fin}_j})\rvert 
w_{{\rm fin}_j}
\label{eq:q2}
\end{equation}
and with $\boldsymbol{\rho}_{\rm fin}$ from~(\ref{eq:fff}).

\subsubsection{Example with an analytical solution}
\label{sec:ana}

We start with an example where the corner opening angle is set to
$\theta=\pi$. Then $\Gamma$ of~(\ref{eq:gamma})  becomes  a
circle with a circumference of $\pi$. The right-hand side
of~(\ref{eq:start}) is set to
\begin{equation}
f(r)=\frac{1}{\ell^\alpha(r)}+\frac{1}{(\pi-\ell(r))^\alpha}\,,
\label{eq:fcirc}
\quad r\in\Gamma\,.
\end{equation}
Here $\alpha$ is a possibly complex parameter with $\Re{\rm e}\{\alpha\}<1$,
controlling the strength of the singularity at
$\gamma$, and $\ell(r)$ is the distance from $\gamma$ to $r$ measured
in arc length along $\Gamma$ as the circle is traversed in a
counterclockwise fashion. The solution $\rho$ to~(\ref{eq:start}) can
in this example be computed analytically
\begin{equation}
\rho(r)=f(r)+\frac{2\lambda\pi^{-\alpha}}{(1-\alpha)(1-\lambda)}\,,
\quad r\in\Gamma\,,
\label{eq:rhoana}
\end{equation}
as can the quantity $q$:
\begin{equation}
q=\frac{2\pi^{(1-\alpha)}}{(1-\alpha)(1-\lambda)}\,.
\label{eq:qana}
\end{equation}
Note that $\rho(r)$ of~(\ref{eq:rhoana}) diverges everywhere on
$\Gamma$ as  $\alpha\to 1^-$, so there is no solution for
$\alpha=1$. Neither is there a finite limit value of $q$.
If $\Im{\rm m}\{\alpha\}\ne 0$ is fixed, however, then there is a limit
solution $\rho(r)$ and a finite limit value of $q$ as
$\Re{\rm e}\{\alpha\}\to 1^-$. 

\begin{figure}[t]
\centering
   \includegraphics[height=42mm]{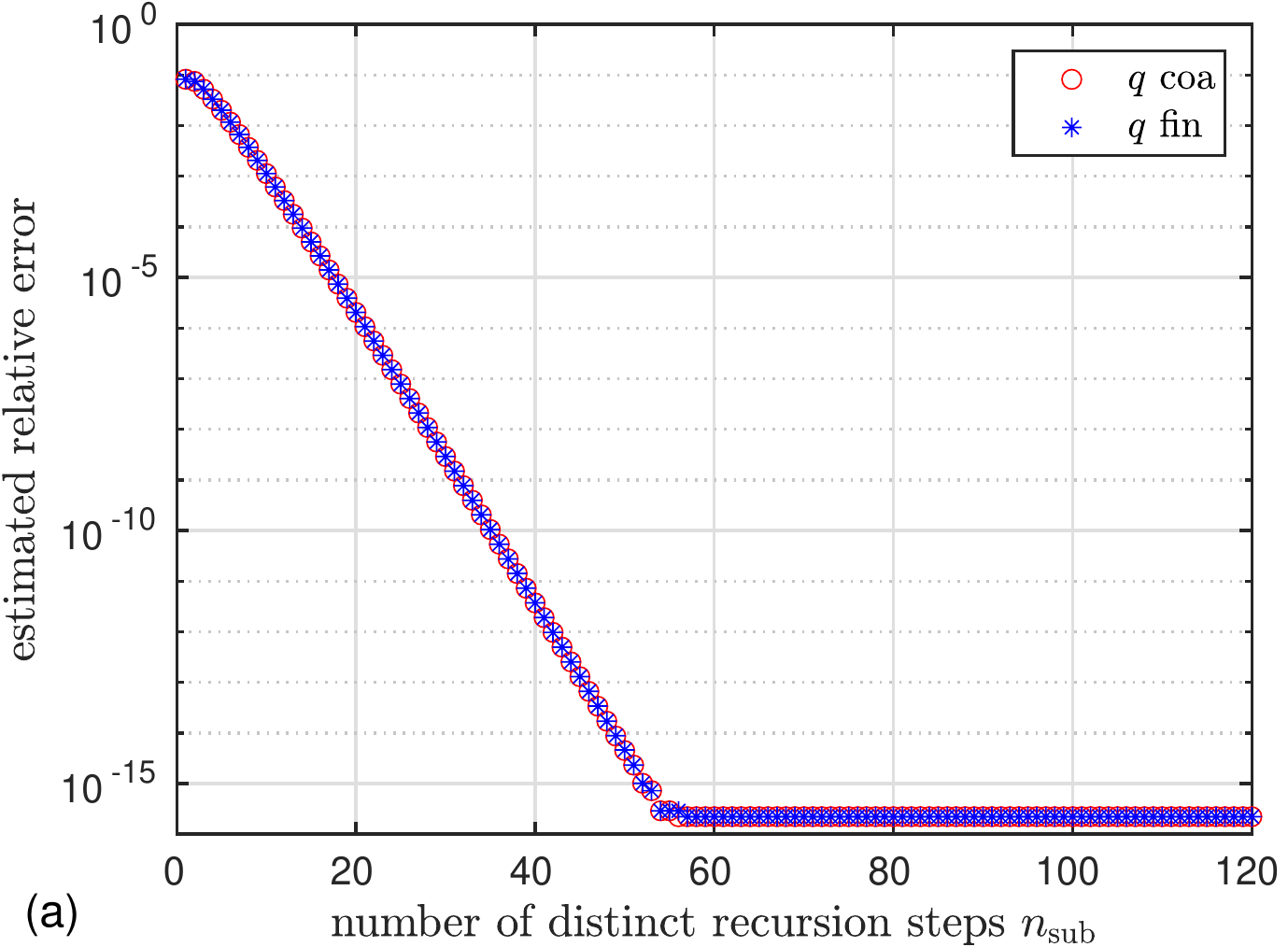}
   \includegraphics[height=42mm]{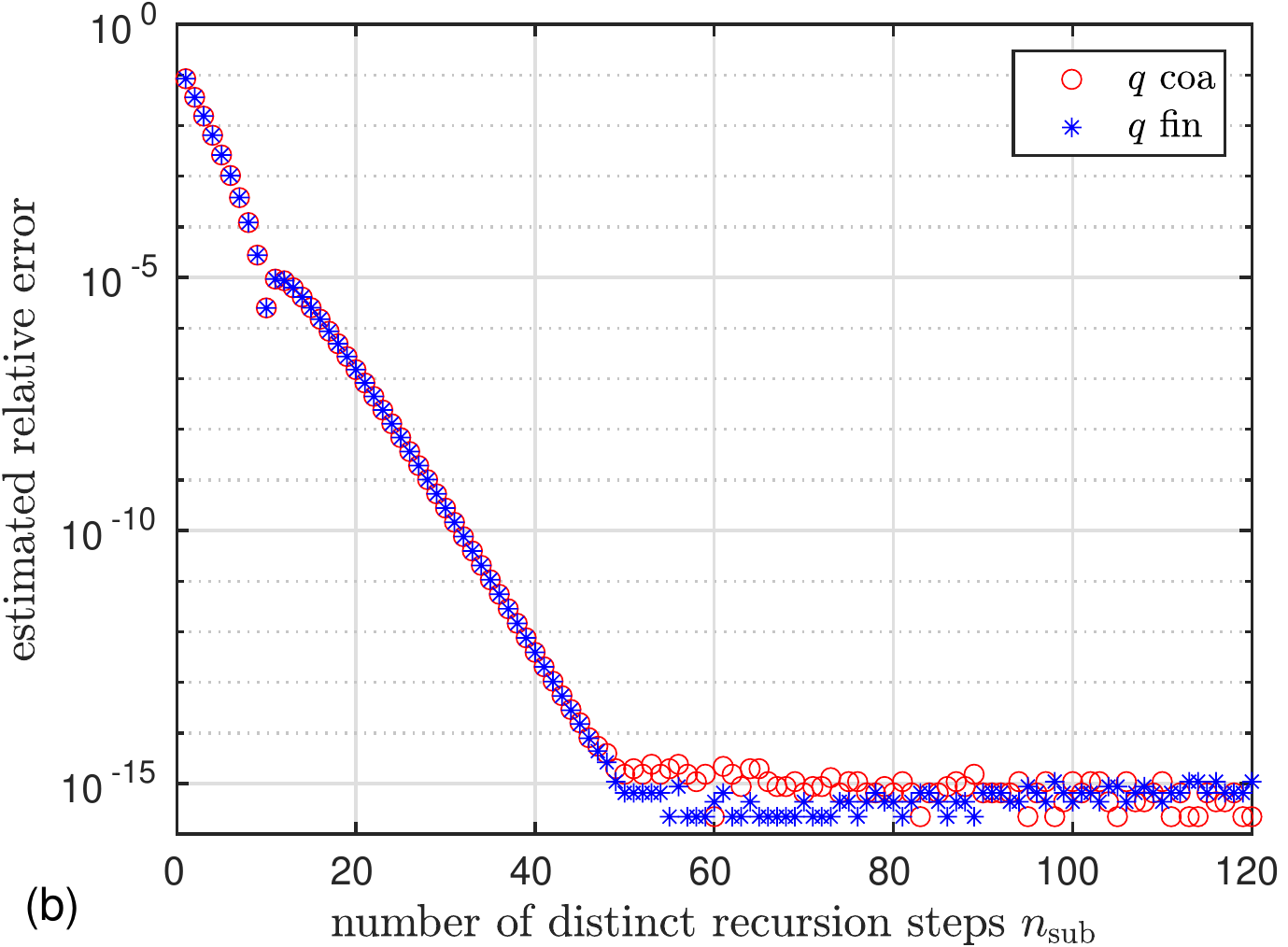}

   \vspace{4mm}

   \includegraphics[height=42mm]{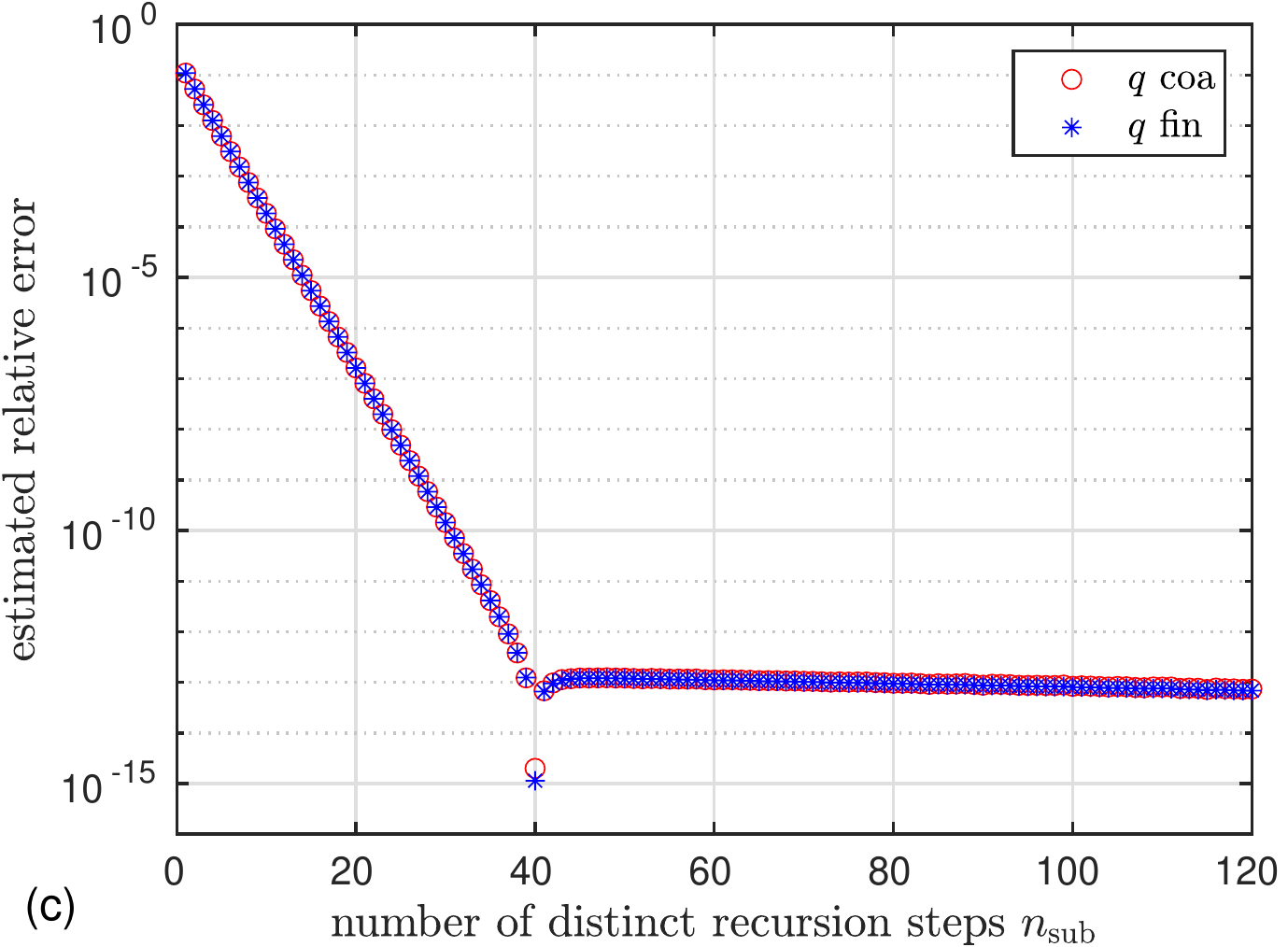}
   \includegraphics[height=42mm]{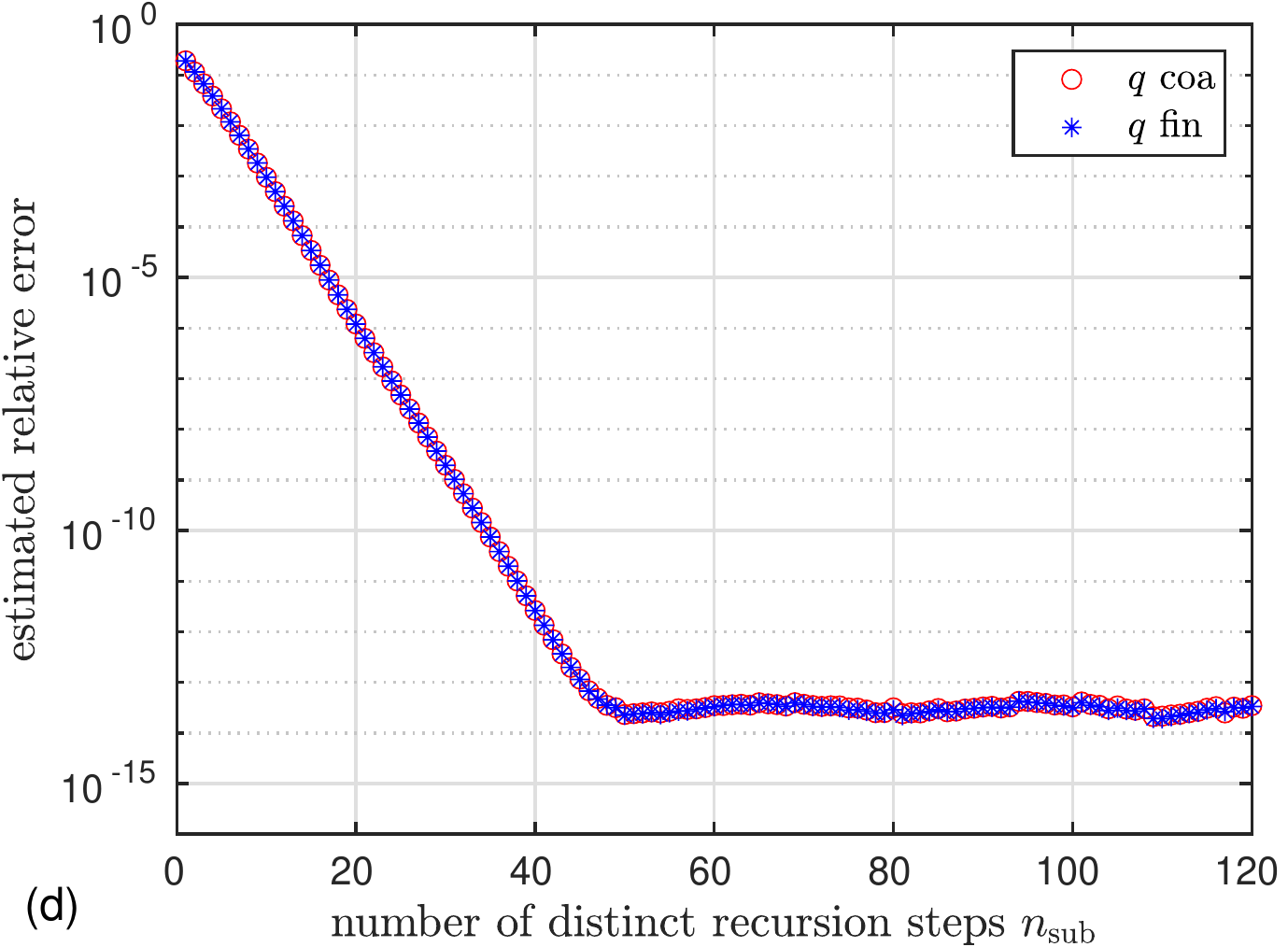}
\caption{\sf Convergence of $q$ with the number of distinct recursion
   levels $n_{\rm sub}$ in the example of Section~\ref{sec:ana}. The
   singularity strengths $\alpha$ of~(\ref{eq:fcirc}) are taken as: (a)
   $\alpha=0.5$; (b) $\alpha=0.94$; (c) $\alpha=0.99$; (d)
   $\alpha=1+0.3{\rm i}$.}
\label{fig:ana}
\end{figure}
Figure~\ref{fig:ana} shows results obtained with $10$ quadrature
panels on the coarse mesh on $\Gamma$, corresponding to $160$
discretization points on the coarse grid, and for various singularity
strengths $\alpha$. For $\alpha<1$ and not too close to $1$, the
results produced by our scheme are essentially fully accurate. Values
of $q$ computed on the coarse grid ($q_{\rm coa}$) and on the fine
grid ($q_{\rm fin}$) agree completely -- indicating that the
reconstruction procedure of Section~\ref{sec:backrecur} is stable.

Note, in Figure~\ref{fig:ana}(d), that for $\alpha=1+0.3{\rm i}$,
which corresponds to a singular right-hand side $f$ that is not even
in $L^1$, the scheme loses only about two digits of accuracy.  Note
also that, thanks to the use of initializers, a number $n_{\rm sub}=60$
of distinct recursion steps is more than enough to make $q$
converge (to the achievable precision)  for all values of $\alpha$
tested in Figure~\ref{fig:ana}. 

\subsubsection{A one-corner example}
\label{sec:snow}

We now consider an example where the corner opening angle
in~(\ref{eq:gamma}) is set to $\theta=\pi/2$. The contour $\Gamma$
then assumes the shape shown in Figure~\ref{fig:sno2}. The right-hand
side of~(\ref{eq:start}) is set to
\begin{equation}
f(r)=\frac{1}{\lvert r\rvert^\alpha}+\log{\lvert r\rvert}\,,
\label{eq:fsno}
\quad r\in\Gamma\,.
\end{equation}

\begin{figure}[t]
\centering 
  \includegraphics[height=42mm]{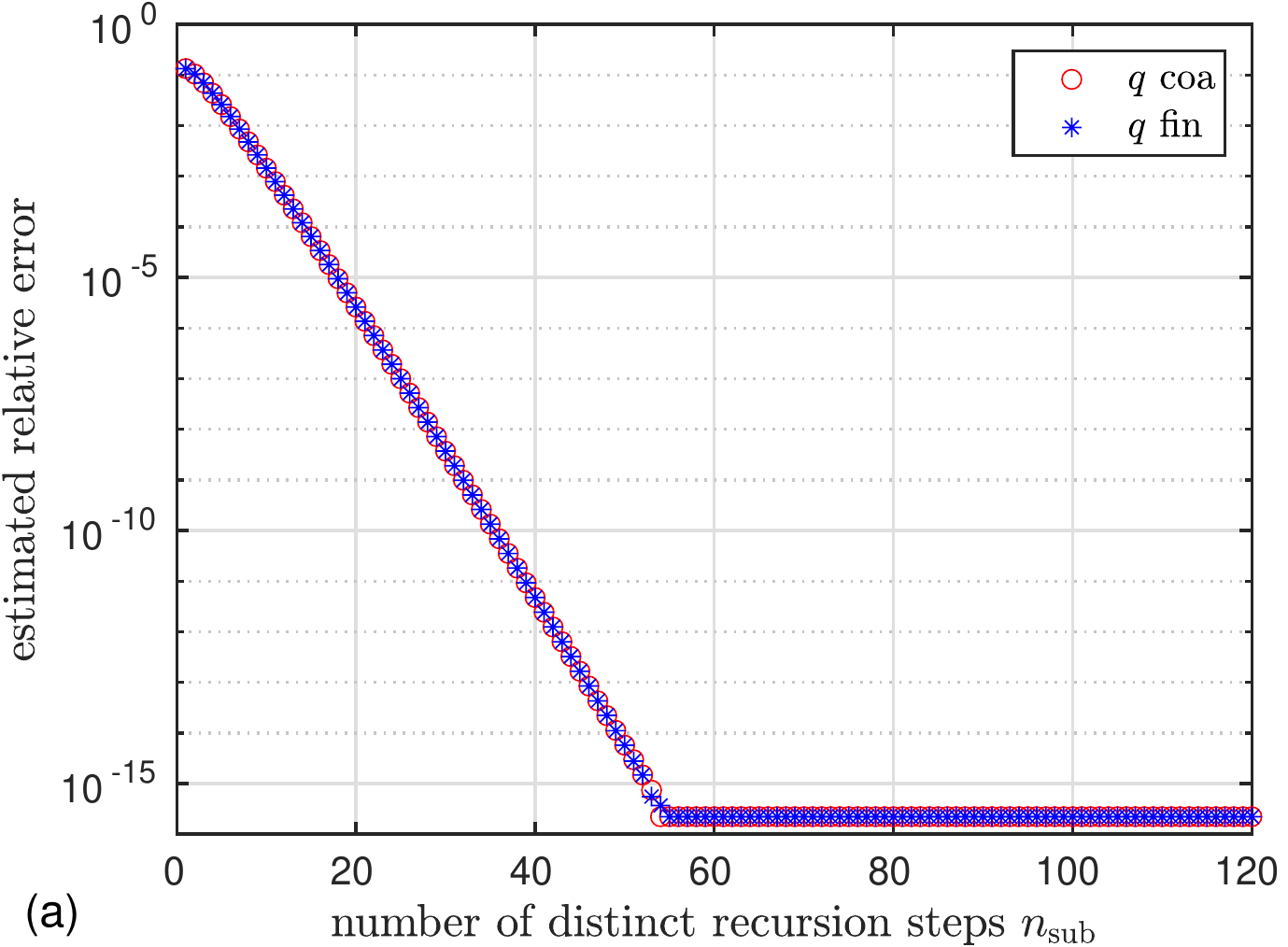}
  \includegraphics[height=42mm]{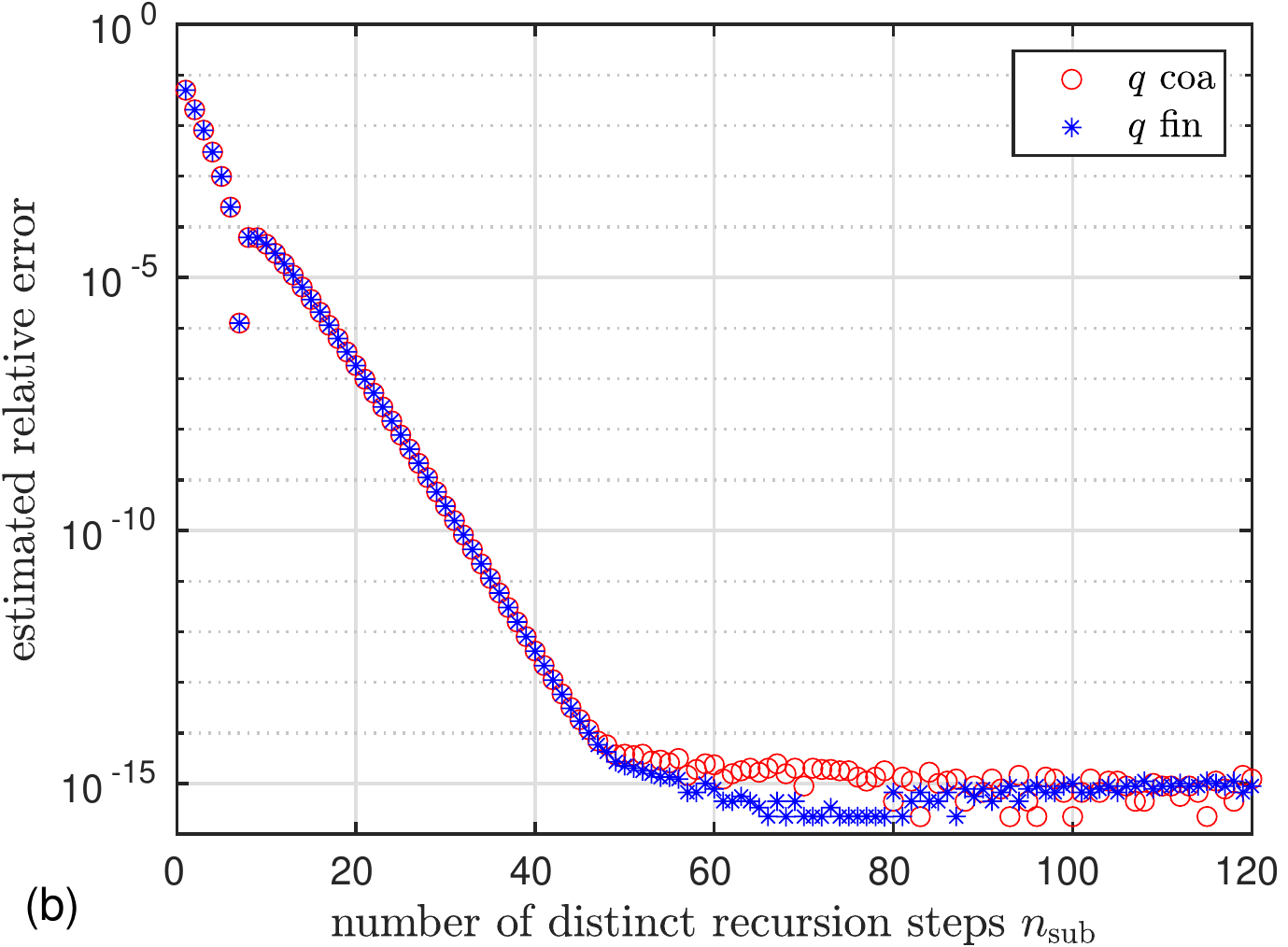}

  \vspace{4mm}
  
  \includegraphics[height=42mm]{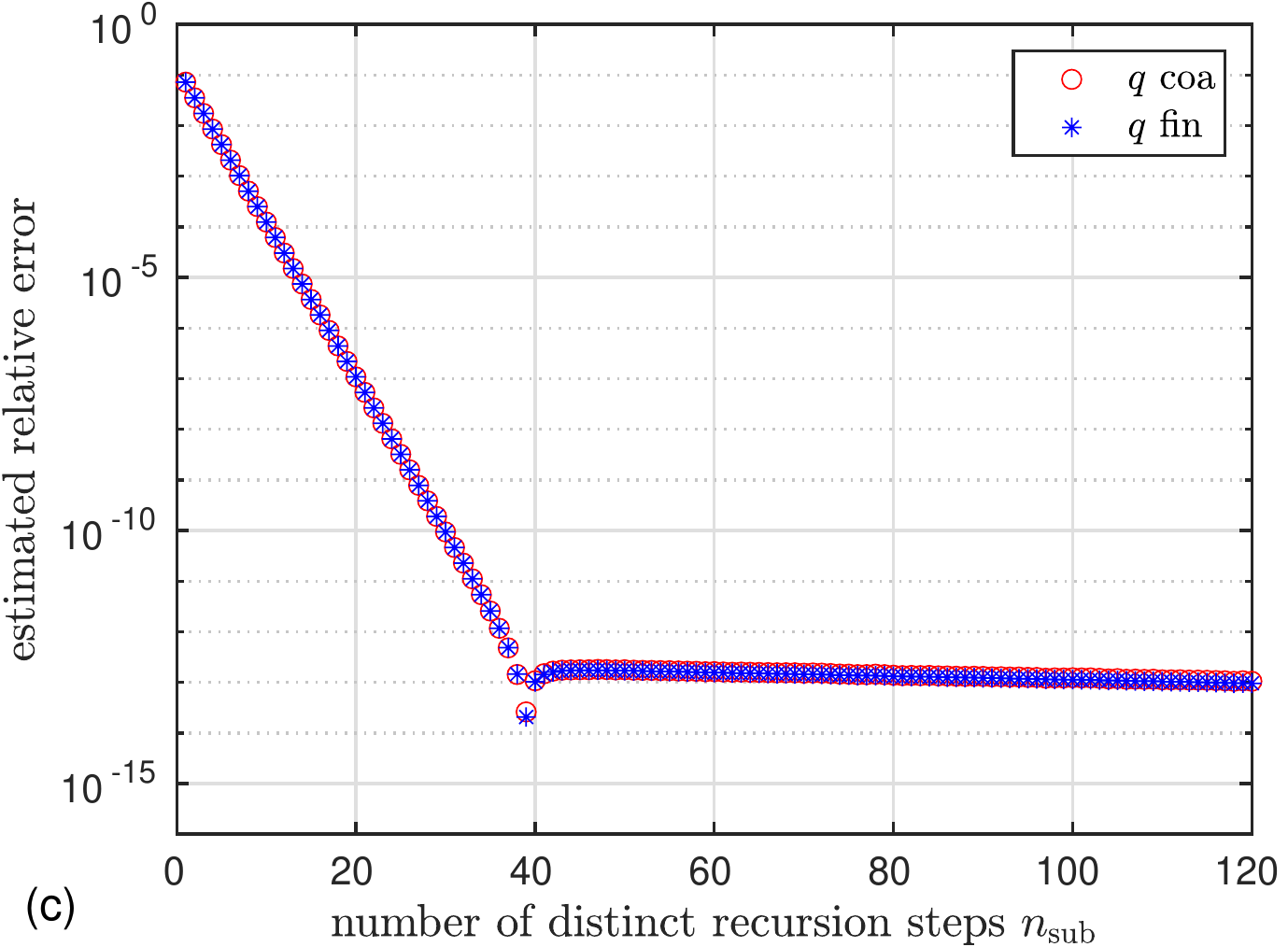}
  \includegraphics[height=42mm]{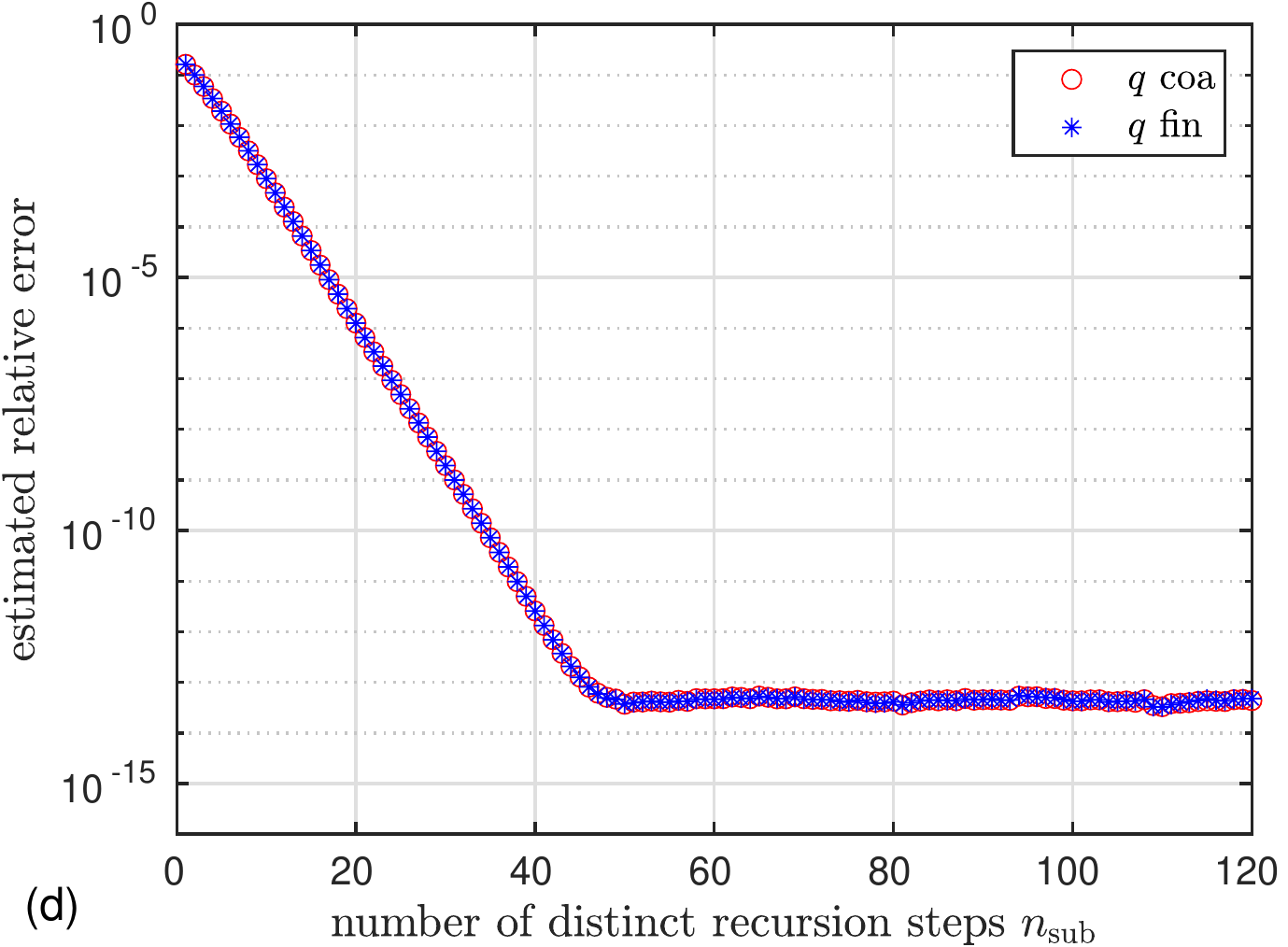}
\caption{\sf Convergence of $q$ with the number of distinct recursion 
  levels $n_{\rm sub}$ in the example of Section~\ref{sec:snow}. The
  singularity strengths $\alpha$ of~(\ref{eq:fsno}) are taken as: (a)
  $\alpha=0.5$; (b) $\alpha=0.94$; (c) $\alpha=0.99$; (d)
  $\alpha=1+0.3{\rm i}$.}
\label{fig:snow}
\end{figure}

Figure~\ref{fig:snow}, analogous to Figure~\ref{fig:ana}, shows
results obtained with $160$ discretization points on the coarse grid
on $\Gamma$ and for various singularity strengths $\alpha$. In the
absence of an analytical expression for $q$ we use, as a reference,
the value of $q_{\rm coa}$ obtained with $n_{\rm sub}=500$. For
example, with $\alpha=0.94$ this value is $q\approx 63.53529437281905$
and could be compared to the value $q\approx 63.53529437281894$,
obtained with $29,\!088$ discretization points on the fine grid on
$\Gamma$ using the $L^1$-norm-preserving Nystr{\" o}m discretization of
Askham and Greengard~\cite[Section 4]{AskhGree14}, which extends the
$L^2$-inner-product-preserving discretization of Bremer~\cite[Section
2]{Brem12}, in combination with compensated
summation~\cite{High96,Kaha65}. Since the relative difference between
these two reference values is on the order of our own error estimate
in Figure~\ref{fig:snow}(b), we believe that our error estimates are
reliable also in Figure~\ref{fig:snow}(c,d), where the norm-preserving
discretization cannot be used due to memory constraints.

We add that the execution of our code is very rapid. The computing
time, per data point, in Figure~\ref{fig:snow} varies with
$n_{\rm sub}$ and also differs between $q_{\rm coa}$ and $q_{\rm fin}$, but
it is much less than a second for all the data points shown.

\subsection{The exterior Dirichlet Helmholtz problem}

The exterior Dirichlet problem for the Helmholtz equation has been
considered in \cite[Section~18]{Hels18} in detail. Here we use this
problem to illustrate that its solution $U(r)$ can be found accurately
in the entire computational domain also when the right-hand side
$f(r)$ is singular or nearly singular. The integral representation of
$U(r)$ and the resulting boundary integral equation (a combined field
integral equation) are identical to those in
\cite[Section~18]{Hels18}. The boundary $\Gamma$ is given
by~(\ref{eq:gamma}) with $\theta=\pi/2$. We consider two cases

\begin{enumerate}[(a)]
\item {\it Singular right-hand side}\\
  The exact solution is
  \begin{equation}
  U(r)=H_0^{(1)}(\omega|r|)
  \end{equation}
  with $f(r)$ being the restriction of $U(r)$ to $\Gamma$. Here $\omega$
  is the wavenumber and $H_0^{(1)}$ is the first-kind Hankel function of
  order zero, that is, the field is generated by an acoustic monopole
  right at the corner.
\item {\it Nearly singular right-hand side}\\
  The exact solution is
  \begin{equation}
  U(r)=H_1^{(1)}(\omega|r-r'|) \frac{x-x'}{|r-r'|}\,,
  \end{equation}
  with $f(r)$ being the restriction of $U(r)$ to $\Gamma$, and
  $r'=(10^{-10},0)$. That is, the field is generated by an acoustic
  dipole very close to the corner.
\end{enumerate}

The integral equation \cite[Eq.~(68)]{Hels18} is solved with
$\omega=10$ using the method in this paper, the singular point is
taken as $\gamma=0$ both for (a) and for (b). For both cases,
the coarse grid on $\Gamma$ has 56 panels, i.e., 896 discretization points;
the number of refinement levels is set to $\nsub=112$;
and the number of GMRES iterations needed is 18. The field is
evaluated using the scheme in \cite[Section~20]{Hels18}. The solve phase
takes about 4 seconds, while the field evaluation takes about 100 seconds,
which can be accelerated via the fast multipole method~\cite{cheng2006cm}
when necessary. A Cartesian grid of $200 \times 200$ equispaced points is
placed on the rectangle $[-0.1, 1.1] \times [-0.53, 0.53]$ and evaluations
are carried out at those 27,760 grid points that are in the exterior
domain. For case (a), 5,088 target points activate local panelwise
evaluation for close panels; 920 target points close to the corner vertex
require that the solution $\boldsymbol{\rho}_{\rm fin}$ on the fine grid
is reconstructed using the backward recursion in Section~\ref{sec:backrecur}.
For case (b), the numbers are 6,364 and 1,326, respectively. 

 Figure~\ref{fig:ex2} shows the absolute error in the numerical
solution, demonstrating that our scheme achieves high accuracy in the
entire computational domain for both {\it singular} and {\it nearly
singular} right-hand sides.  The small difference in achievable
accuracy between (a) and (b) can chiefly be explained by that (b) has
a stronger singularity in the right-hand side $f(r)$ than has (a) and
is, thus, harder to resolve. 

\begin{figure}[t]
\centering
\includegraphics[height=43mm]{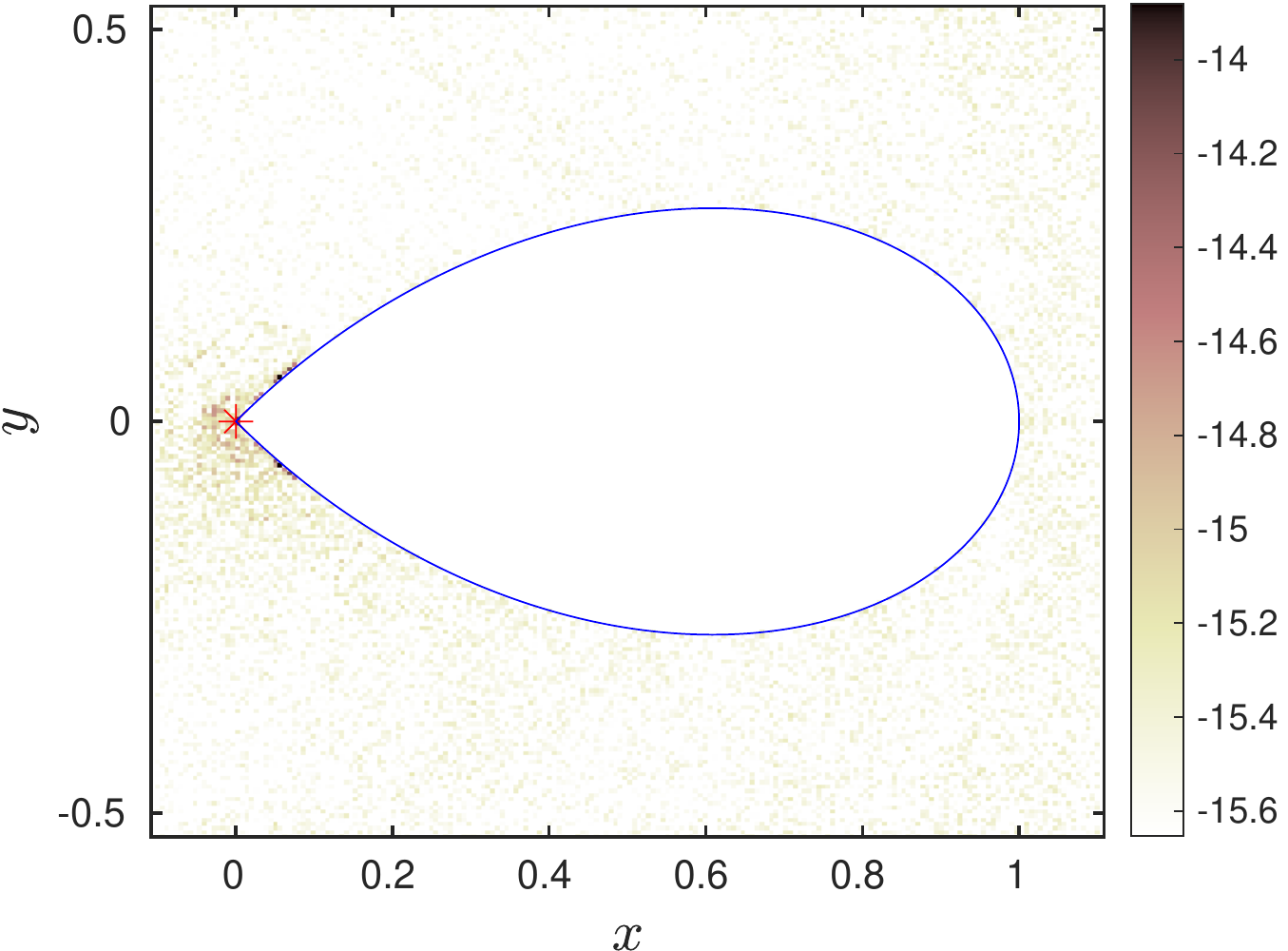}
\includegraphics[height=43mm]{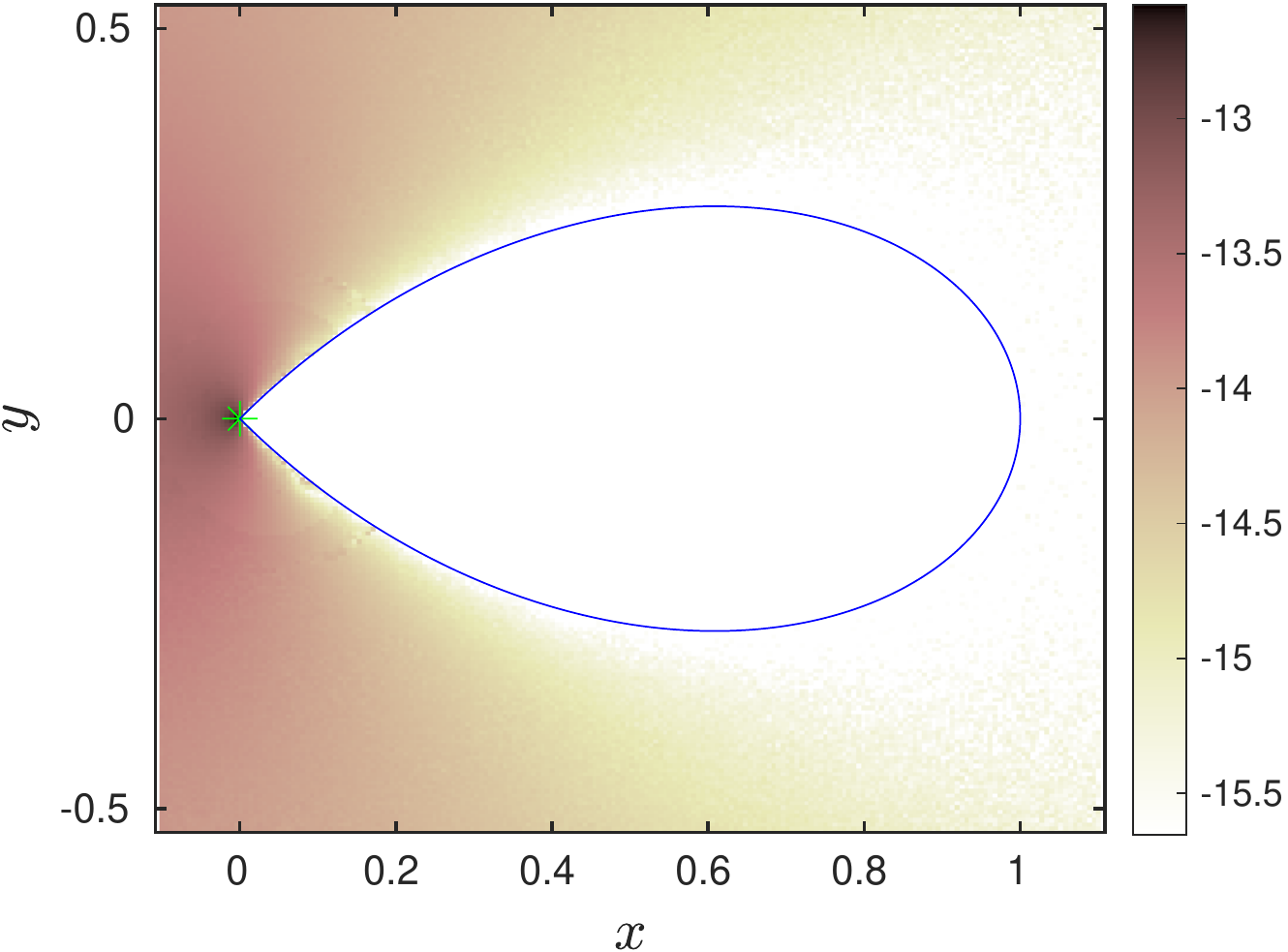}
\caption{\sf $\log_{10}$ of absolute error in the solution $U(r)$ to
   the exterior Dirichlet Helmholtz problem. 
   The blue curve is the boundary $\Gamma$. Left:
   $U(r)$ is an acoustic monopole field with the source at $r'=(0,0)$
   shown as a red star. Right: $U(r)$ is an acoustic dipole field with
   the source at $r'=(10^{-10},0)$ shown as a green star.}
\label{fig:ex2}
\end{figure}

\section{Application to the linearized BGKW equation for the Couette flow}
\begin{table}[t]
  \caption{\sf The velocity $u$ at $x=0.5$ at sample values of the Knudsen number $k$.
    Note that in~\cite{Jiang16}, the interval is shifted from $[-0.5,0.5]$ to $[0,1]$.
    Thus, the velocity $u$ at $x=0.5$ is listed as $u$ at $x=1$ in
    Tables 1 and 2 in~\cite{Jiang16}.}
  \sisetup{tight-spacing=true,exponent-product = \cdot,table-format=3.15}
\centering
\begin{tabular}{
    S[table-format=3.1]
    S[scientific-notation = true,table-format=5.15e-1]
    S[scientific-notation = true,table-format=4.15e-1]
    S[scientific-notation = true,table-format=3.1e-2]}
  \toprule
  ${k}$  & ${u(0.5)}$ { in \cite{Jiang16}} & ${u(0.5)}$ { (current)} & {Error}  \\
  \midrule
  0.003 &  4.978915352789693e-01 &    4.978915352789726e-01  &   6.6e-15 \\
  0.01  &  4.930697807742208e-01 &    4.930697807742217e-01  &   1.8e-15 \\
  0.03  &  4.800058682766829e-01 &    4.800058682766837e-01  &   1.6e-15 \\
  0.1   &  4.412246409722421e-01 &    4.412246409722424e-01  &   6.3e-16 \\
  0.3   &  3.672125695500504e-01 &    3.672125695500499e-01  &   1.4e-15 \\
  1.0   &  2.518613399894732e-01 &    2.518613399894736e-01  &   1.5e-15 \\
  2.0   &  1.852462993740218e-01 &    1.852462993740218e-01  &   1.5e-16 \\
  3.0   &  1.504282444992075e-01 &    1.504282444992074e-01  &   3.7e-16 \\
  5.0   &  1.126351880294592e-01 &    1.126351880294592e-01  &   2.5e-16 \\
  7.0   &  9.171689613521435e-02 &    9.171689613521428e-02  &   7.6e-16 \\
 10.0   &  7.292211299328497e-02 &    7.292211299328491e-02  &   7.6e-16 \\
 30.0   &  3.381357342231838e-02 &    3.381357342231840e-02  &   6.2e-16 \\
 100.0  &  1.343072948081877e-02 &    1.343072948081874e-02  &   1.9e-15 \\
 \bottomrule
 \end{tabular}
\label{table1}
\end{table}

In this section, we revisit the integral equation that is derived
from the linearized BGKW equation for the steady Couette
flow~\cite{Cercignani2000,Jiang16,Li2015}
\begin{subequations}
\begin{align}
  &
  u(x) 
  - 
  \frac{1}{k\sqrt{\pi}}
  \int_{-0.5}^{0.5} J_{-1} \left(\frac{|x - y|}{k} \right) 
  u(y) \,{\rm d}y
  =
  f(x)\,,\quad x\in[-0.5,0.5]\,,
\label{bgkw}
  \\
  &
  f(x) 
  =
  \frac{1}{2\sqrt{\pi}}
  \left[ 
    J_0\left(\frac{0.5-x}{k}\right)
    -
    J_0\left(\frac{0.5+x}{k}\right)
  \right],
\label{bgkwrhs}
\end{align}
\end{subequations}
where the parameter $k$ is the Knudsen number and $J_n$ is the $n$th
order Abramowitz function defined by
\begin{equation}
  J_n(x)
  =
  \int_0^\infty t^n e^{-t^2-x/t}\,{\rm d}t\,,
  \quad
  n \geq -1\,.
\label{abram}
\end{equation}
See, for example, \cite{Gimbutas20} and references therein for the
properties of Abramowitz functions and an accurate numerical
scheme for their evaluation.
The BGKW equation has been studied in \cite{Jiang16}, where it is shown
that the solution of \eqref{bgkw} contains singular terms $(x \ln x)^n$
for $n \in \mathbb{N} := \{1,\, 2,\, \ldots\}$ at the endpoints.
It is known that the kernel function $J_{-1}(x)$ has both absolute value
and logarithmic singularities at $x=0$, and that the right-hand-side
function \eqref{bgkwrhs} has $x \ln x$ singularity at the endpoints.
Benchmark calculations have been carried out in \cite{Jiang16},
using dyadic refinement towards the endpoints to treat the
singularities of the solution and the right-hand-side function, and generalized
Gaussian quadrature \cite{Ma96} to treat the kernel singularities. 

\begin{table}[t]
  \caption{\sf The stress $P_{xy}$ at sample values of the Knudsen number $k$.}
  \sisetup{tight-spacing=true,exponent-product = \cdot,table-format=3.15}
\centering
\begin{tabular}{
    S[table-format=3.1]
    S[scientific-notation = true,table-format=5.15e-1]
    S[scientific-notation = true,table-format=4.15e-1]
    S[scientific-notation = true,table-format=3.1e-2]}
  \toprule
  ${k}$  & ${P_{xy}}$ { in \cite{Jiang16}} & ${P_{xy}}$ { (current)} & {Error}  \\
  \midrule
  0.003 &  -1.490909702131201e-03 &    -1.490909702131188e-03  &   8.7e-15 \\
  0.01  &  -4.900405009657547e-03 &    -4.900405009657524e-03  &   4.8e-15 \\
  0.03  &  -1.413798601526842e-02 &    -1.413798601526841e-02  &   4.9e-16 \\
  0.1   &  -4.155607782558620e-02 &    -4.155607782558619e-02  &   3.3e-16 \\
  0.3   &  -9.344983511356682e-02 &    -9.344983511356685e-02  &   3.0e-16 \\
  1.0   &  -1.694625753368226e-01 &    -1.694625753368225e-01  &   3.3e-16 \\
  2.0   &  -2.083322536749375e-01 &    -2.083322536749375e-01  &   0.0e+00 \\
  3.0   &  -2.266437497658084e-01 &    -2.266437497658084e-01  &   0.0e+00 \\
  5.0   &  -2.446632678455994e-01 &    -2.446632678455994e-01  &   0.0e+00 \\
  7.0   &  -2.536943539674479e-01 &    -2.536943539674479e-01  &   0.0e+00 \\
 10.0   &  -2.611624603488405e-01 &    -2.611624603488405e-01  &   0.0e+00 \\
 30.0   &  -2.743853873277227e-01 &    -2.743853873277228e-01  &   2.0e-16 \\
 100.0  &  -2.796682147138912e-01 &    -2.796682147138912e-01  &   2.0e-16 \\
 \bottomrule
 \end{tabular}
\label{table2}
\end{table}

\begin{table}[t]
  \caption{\sf The half-channel mass flow rate $Q$ at 
    sample values of the Knudsen number $k$.}
  \sisetup{tight-spacing=true,exponent-product = \cdot}
\centering
\begin{tabular}{S[table-format=3.1]
    S[scientific-notation = true,table-format=4.15e-1]
    S[scientific-notation = true,table-format=3.15e-1]
    S[scientific-notation = true,table-format=3.1e-2]}
  \toprule
  ${k}$  & ${Q}$ { in \cite{Jiang16}} & ${Q}$ { (current)} & {Error}  \\
  \midrule
  0.003 &  1.242445655299172e-01 &    1.242445655299167e-01 &    4.4e-15 \\
  0.01  &  1.225330275292623e-01 &    1.225330275292621e-01 &    1.8e-15 \\
  0.03  &  1.180147037188893e-01 &    1.180147037188893e-01 &    1.2e-16 \\
  0.1   &  1.057028408172292e-01 &    1.057028408172292e-01 &    2.6e-16 \\
  0.3   &  8.560111699820618e-02 &    8.560111699820613e-02 &    4.9e-16 \\
  1.0   &  5.804708735555459e-02 &    5.804708735555460e-02 &    2.4e-16 \\
  2.0   &  4.281659776113917e-02 &    4.281659776113918e-02 &    1.6e-16 \\
  3.0   &  3.489298506190833e-02 &    3.489298506190833e-02 &    2.0e-16 \\
  5.0   &  2.627042060967383e-02 &    2.627042060967383e-02 &    1.3e-16 \\
  7.0   &  2.147460412330841e-02 &    2.147460412330841e-02 &    1.6e-16 \\
 10.0   &  1.714449048590649e-02 &    1.714449048590649e-02 &    2.0e-16 \\
 30.0   &  8.043009085700258e-03 &    8.043009085700263e-03 &    6.5e-16 \\
 100.0  &  3.226757181742400e-03 &    3.226757181742397e-03 &    9.4e-16 \\
 \bottomrule
 \end{tabular}
\label{table3}
\end{table}

We have implemented the method of this paper, based
on~\eqref{eq:prec2}, to solve \eqref{bgkw}. We note that
both endpoints are singular points and there is only one side to each
singular point, since we are dealing with an open arc instead of a closed
contour. This leads to straightforward modifications in the method
and its implementation. Some implementation details are as follows.
First, the kernel-split
quadrature is applied to treat the kernel singularity: the correction
to the logarithmic singularity was done in~\cite{Hels09JCP}; the correction
on diagonal blocks for the absolute value singularity can be derived easily
in an identical manner; the splitting of the kernel into various parts is done
using either the series expansion of the Abramowitz function~\cite{abram53} or the
Chebyshev expansion for each part as in~\cite{macleod1992anm}. When the Knudsen
number is low, the kernel is sharply peaked at the origin.
A modified version of the upsampling scheme in~\cite{klinteberg19}
is used to resolve the sharp peak
of the kernel accurately and efficiently so that the coarse panels only need to resolve
the features of the solution. Here the modification is that
the exact centering in~\cite{klinteberg19} is not enforced for local adaptive
panels for each target. Instead, the so-called level-restricted
property~\cite{ethridge2001sisc},
i.e., sizes of adjacent panels can differ at most by a factor of 2,
is used to ensure the accuracy in the calculation of the integrals. We remark
that the cost of the upsampling scheme is $O(\log(1/k))$ as $k\rightarrow 0$,
the same as that in~\cite{klinteberg19}.

Second, it is observed numerically that the condition number of the integral
equation \eqref{bgkw} increases as $k$ decreases, reaching about $2\cdot 10^4$
for $k=0.003$. On the other hand, the solution $u(x)$ approaches the
asymptotic solution $u_{\rm asym}(x)=x$ as $k\rightarrow 0$. Thus, in order
to reduce the effect of the ill-conditioning of the integral equation
for small values of $k$, we write $u(x)=w(x)+x$ and solve the following
equation for $w(x)$ instead when $k\le 0.3$:
\begin{subequations}
\begin{align}
  &
  w(x) 
  - 
  \frac{1}{k\sqrt{\pi}}
  \int_{-0.5}^{0.5} J_{-1} \left(\frac{|x - y|}{k} \right) 
  w(y) \,{\rm d}y
  =
  h(x)\,,\quad x\in[-0.5,0.5]\,,
\label{bgkw2}
  \\
  &
  h(x) 
  =
  -\frac{k}{\sqrt{\pi}}
  \left[ 
    J_1\left(\frac{0.5-x}{k}\right)
    -
    J_1\left(\frac{0.5+x}{k}\right)
  \right].
\label{bgkwrhs2}
\end{align}
\end{subequations}
Since $w(x)$ is a small perturbation when $k$ is small, the inaccuracy in the calculation
of $w(x)$ has almost no effect on the overall accuracy of $u(x)$. This allows us
to achieve the machine precision for all physical quantities of interest
for a much wider range of the Knudsen number $k$ than reported in~\cite{Jiang16}.

We have repeated the calculations in \cite{Jiang16} using the current method.
For all values of $k$, the computational domain $[-0.5,0.5]$ is divided into
four coarse panels; $n_{\rm sub}$ is set to $41$; and the GMRES stopping criterion
is set to machine epsilon.
Tables~\ref{table1}--\ref{table3} list the values of the velocity $u$ at $x=0.5$,
the stress $P_{xy}$ \cite[Eq.~(43)]{Jiang16} and
the half-channel mass flow rate $Q$ \cite[Eq.~(42)]{Jiang16}
at sample values of the Knudsen number $k$,
where the second column contains the values in \cite{Jiang16}, the third
column contains the values using the current method, and the last column shows
the relative difference between these values. Here $u(0.5)$ is obtained via the backward
recursion in Section~\ref{sec:backrecur}, $P_{xy}$ and $Q$ are calculated using the
velocity on the coarse grid with the kernel-split quadrature applied to treat the
logarithmic singularity of $J_0(x)$ at $x=0$ in $P_{xy}$. 
It is clear that the numerical results agree with those in \cite{Jiang16} to
machine precision for all sample values of $k$. 

The current method is much more efficient as compared with the one used
in \cite{Jiang16}. For $k=0.003$, the computation takes about $0.05$ second
of {\rm CPU} time; and for $k=10$, the computational time becomes unmeasurable
with {\sc Matlab}'s {\sf cputime} command for a single run, i.e., less than
$0.01$ second.
In \cite{Jiang16}, the timing results are $62.8$
and $23.9$ seconds, respectively. The RCIP method eliminates the
dyadic refinement towards the endpoints in the solve phase, and the upsampling
scheme allows us to use such coarse panels that they only need to
capture the features of the solution. The combination of these two techniques
enables us to use the optimal number of discretization points for constructing
the system matrix. The full machine precision accuracy of the current method
completely removes the need of using quadruple precision arithmetic.
Finally, the preconditioner ${\bf R}$ also reduces the number of {\rm GMRES} iterations.
Indeed, the number of {\rm GMRES} iterations required is at most $6$ for all sample values
of $k$, whereas $430$ was reported in \cite{Jiang16} for $k=0.003$.
All of this results in a significant reduction in the computational
cost, i.e., a speedup of at least a factor of $1000$.

\section{Concluding remarks}

Finding efficient solvers for SKIEs on non-smooth boundaries is a
field with substantial recent activity. However, almost all existing
methods only work well for smooth right-hand sides and may also
involve a fair amount of operator-specific analysis and
precomputed quantities. In contrast, the method constructed in
this paper for singular and nearly singular right-hand sides computes
all intermediary quantities needed on-the-fly and only involves a bare
minimum of analysis.

The extension of the current method to multiple right-hand sides
can be carried out easily. The compressed inverse ${\bf R}$ is independent
on $f$ and needs only to be computed once, while the compressed
inverse ${\bf R}_f$ depends on $f$ and needs to be computed afresh for
each new $f$. The setup cost of ${\bf R}_f$ for each additional $f$
can, furthermore, be reduced since several of the matrices entering
into the forward recursion for ${\bf R}_f$ are independent of $f$ and
can be stored after they have been computed on-the-fly. A similar
situation holds in the backward recursions for $\rho$ on the fine grid
-- should that quantity be needed. 

In a certain sense, the work completes the RCIP method in two dimensions
since the splitting into a smooth part and a local singular part is carried out
on both sides of the integral equation. The RCIP method can be generalized to
three dimensions. In~\cite{helsing2013acha}, the RCIP method was extended to solve
a boundary integral equation on the surface of a cube. In~\cite{helsing2014jcp},
it was extended to solve boundary integral equations on axially symmetric surfaces.
In a recent talk~\cite{gimbutas2021siamam},
an RCIP-type scheme for discretizing corner and edge singularities
in three dimensions was discussed. For general surfaces in three dimensions,
the extension of the RCIP is more or less straightforward when the geometry admits
a local hierarchical discretization near the corner and edge singularities;
and the extension of the work in this paper should follow subsequently.

\section*{Acknowledgments}

J. Helsing was supported by the Swedish Research Council under contract
2015-03780. S. Jiang was supported in part by the United States National
Science Foundation under grant DMS-1720405.
The authors would like to thank Anders Karlsson at Lund
University, Alex Barnett and Leslie Greengard at the Flatiron Institute
for helpful discussions.

\appendix
\section{Derivation of the forward recursion formula \eqref{eq:finalRf} for ${\bf R}_f$}
\label{sec:rfderiv}
First, we prove the following lemma.
\begin{lemma}
  Suppose that ${\bf A}$ is an invertible $m\times m$ matrix, ${\bf B}$
  is an $n\times n$ matrix, ${\bf U}$ is an $n\times m$ matrix,
  and ${\bf V}$ is an $m\times n$ matrix with $m\ge n$. Suppose further that
  ${\bf U} {\bf A}^{-1}{\bf V}$ and ${\bf A}+{\bf V}{\bf B}{\bf U}$
  are both invertible. Then
  \bea
     &{\bf U}({\bf A}+{\bf V}{\bf B}{\bf U})^{-1}{\bf V} =
     \left(({\bf U} {\bf A}^{-1}{\bf V})^{-1}+{\bf B}\right)^{-1},\\
     \label{rectinversion}
     &{\bf U}({\bf A}+{\bf V}{\bf B}{\bf U})^{-1} =
     \left(({\bf U} {\bf A}^{-1}{\bf V})^{-1}
     +{\bf B}\right)^{-1}({\bf U} {\bf A}^{-1}{\bf V})^{-1}{\bf U} {\bf A}^{-1}.
     \label{rectinversion2}
  \eea
\end{lemma}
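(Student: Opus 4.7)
The lemma is a rectangular variant of the Sherman--Morrison--Woodbury identity. The standard Woodbury formula
$({\bf A}+{\bf V}{\bf B}{\bf U})^{-1}={\bf A}^{-1}-{\bf A}^{-1}{\bf V}({\bf B}^{-1}+{\bf U}{\bf A}^{-1}{\bf V})^{-1}{\bf U}{\bf A}^{-1}$
immediately rearranges into the claimed identities, but it assumes ${\bf B}$ is invertible, which the hypotheses do not guarantee. The plan is therefore to verify both equalities directly, using only the invertibility of ${\bf A}$, of ${\bf A}+{\bf V}{\bf B}{\bf U}$, and of ${\bf C}:={\bf U}{\bf A}^{-1}{\bf V}$.

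The key algebraic move, carried out once and reused for both identities, is the auxiliary relation
\begin{equation*}
({\bf A}+{\bf V}{\bf B}{\bf U})^{-1}{\bf V}={\bf A}^{-1}{\bf V}({\bf I}_n+{\bf B}{\bf C})^{-1},
\end{equation*}
whose proof is a one-line verification: left-multiply the right-hand side by ${\bf A}+{\bf V}{\bf B}{\bf U}$, pull the right factor ${\bf V}$ out on the left, collect terms to obtain ${\bf V}({\bf I}_n+{\bf B}{\bf C})({\bf I}_n+{\bf B}{\bf C})^{-1}={\bf V}$. Note this is legitimate because ${\bf I}_n+{\bf B}{\bf C}={\bf C}^{-1}({\bf I}_n+{\bf B}{\bf C}){\bf C}\cdot{\bf C}^{-1}$ is invertible whenever $({\bf C}^{-1}+{\bf B})$ is, which will be forced by the hypotheses on ${\bf A}+{\bf V}{\bf B}{\bf U}$. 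Multiplying the auxiliary relation on the left by ${\bf U}$ yields ${\bf U}({\bf A}+{\bf V}{\bf B}{\bf U})^{-1}{\bf V}={\bf C}({\bf I}_n+{\bf B}{\bf C})^{-1}=({\bf C}^{-1}+{\bf B})^{-1}$, which is the first identity.

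For the second identity, I would start from the elementary splitting
$({\bf A}+{\bf V}{\bf B}{\bf U})^{-1}={\bf A}^{-1}-({\bf A}+{\bf V}{\bf B}{\bf U})^{-1}{\bf V}{\bf B}{\bf U}{\bf A}^{-1}$,
obtained by right-multiplying $({\bf A}+{\bf V}{\bf B}{\bf U})^{-1}$ by ${\bf A}{\bf A}^{-1}$ and reshuffling. Left-multiplying by ${\bf U}$ and substituting the first identity gives
${\bf U}({\bf A}+{\bf V}{\bf B}{\bf U})^{-1}={\bf U}{\bf A}^{-1}-({\bf C}^{-1}+{\bf B})^{-1}{\bf B}{\bf U}{\bf A}^{-1}=[{\bf I}_n-({\bf C}^{-1}+{\bf B})^{-1}{\bf B}]{\bf U}{\bf A}^{-1}$,
and the bracketed factor simplifies to $({\bf C}^{-1}+{\bf B})^{-1}{\bf C}^{-1}$ by writing ${\bf I}_n=({\bf C}^{-1}+{\bf B})^{-1}({\bf C}^{-1}+{\bf B})$.

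I do not expect any genuine obstacle here; the only delicate point is bookkeeping the invertibility assumptions (ensuring, in particular, that ${\bf I}_n+{\bf B}{\bf C}$ and ${\bf C}^{-1}+{\bf B}$ are legitimate to invert). Everything else is direct left/right multiplication followed by factoring.
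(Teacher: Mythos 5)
Your approach is valid and takes a genuinely different route from the paper. The paper introduces an artificial complex parameter $\lambda$, expands $({\bf A}+\lambda{\bf V}{\bf B}{\bf U})^{-1}$ as a Neumann series for small $\lambda$, sandwiches both sides by ${\bf U}$ and ${\bf V}$, regroups the terms so that the left side becomes the Neumann series of $\bigl(({\bf U}{\bf A}^{-1}{\bf V})^{-1}+\lambda{\bf B}\bigr)^{-1}$, and then invokes rationality in $\lambda$ together with analytic continuation to pass from the disc of convergence to $\lambda=1$. A feature of that argument is that the invertibility of $({\bf U}{\bf A}^{-1}{\bf V})^{-1}+{\bf B}$ is produced automatically as a by-product of the continuation. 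Your argument is more elementary -- no power series, no complex analysis -- relying on the auxiliary identity $({\bf A}+{\bf V}{\bf B}{\bf U})^{-1}{\bf V}={\bf A}^{-1}{\bf V}({\bf I}_n+{\bf B}{\bf C})^{-1}$ with ${\bf C}={\bf U}{\bf A}^{-1}{\bf V}$ as the engine, and then deriving the second identity from the first via the resolvent-type splitting $({\bf A}+{\bf V}{\bf B}{\bf U})^{-1}={\bf A}^{-1}-({\bf A}+{\bf V}{\bf B}{\bf U})^{-1}{\bf V}{\bf B}{\bf U}{\bf A}^{-1}$. That is cleaner than the paper's remark that the second identity is proved "in an almost identical manner."

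The one place you should tighten is the invertibility of ${\bf I}_n+{\bf B}{\bf C}$. You assert it is "forced by the hypotheses" but do not prove it, and the equality ${\bf I}_n+{\bf B}{\bf C}={\bf C}^{-1}({\bf I}_n+{\bf B}{\bf C}){\bf C}\cdot{\bf C}^{-1}$ you wrote is a tautology (the ${\bf C}$'s cancel); you presumably meant ${\bf I}_n+{\bf B}{\bf C}=({\bf C}^{-1}+{\bf B}){\bf C}$. What is actually needed is a short argument such as: if $({\bf I}_n+{\bf B}{\bf C})x=0$ for some $x\ne 0$, set $y={\bf A}^{-1}{\bf V}x$; then $({\bf A}+{\bf V}{\bf B}{\bf U})y={\bf V}x+{\bf V}{\bf B}{\bf C}x={\bf V}({\bf I}_n+{\bf B}{\bf C})x=0$, and $y\ne 0$ because ${\bf V}x=0$ would give ${\bf C}x={\bf U}{\bf A}^{-1}{\bf V}x=0$ and hence $x=0$ by invertibility of ${\bf C}$, contradicting the invertibility of ${\bf A}+{\bf V}{\bf B}{\bf U}$. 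With that lemma-within-the-lemma supplied, your proof is complete and self-contained.
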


\begin{proof}
Introduce a complex parameter $\lambda$ and consider
$({\bf A}+\lambda {\bf V}{\bf B}{\bf U})^{-1}$. When $\lambda$
is sufficiently small, the following Taylor expansion is
valid
\be
({\bf A}+\lambda {\bf V}{\bf B}{\bf U})^{-1}
={\bf A}^{-1}
-\lambda {\bf A}^{-1}{\bf V}{\bf B}{\bf U}{\bf A}^{-1}
+\lambda^2 {\bf A}^{-1}{\bf V}{\bf B}{\bf U}{\bf A}^{-1}{\bf V}{\bf B}{\bf U}{\bf A}^{-1}
-\ldots,
\label{abinv}
\ee
Mutiplying both sides of \eqref{abinv} with ${\bf U}$ from the left and
with ${\bf V}$ from the right and regrouping, we obtain
\be
\ba
{\bf U}({\bf A}+\lambda{\bf V}{\bf B}{\bf U})^{-1}{\bf V} &=
{\bf U} {\bf A}^{-1}{\bf V} - \lambda \left({\bf U} {\bf A}^{-1}{\bf V}\right){\bf B}
\left({\bf U} {\bf A}^{-1}{\bf V}\right)\\
&+\lambda^2\left({\bf U} {\bf A}^{-1}{\bf V}\right) {\bf B}
\left({\bf U} {\bf A}^{-1}{\bf V}\right)
{\bf B}\left({\bf U} {\bf A}^{-1}{\bf V}\right)-\ldots\\
&=\left(\left({\bf U} {\bf A}^{-1}{\bf V}\right)^{-1}+\lambda{\bf B}\right)^{-1},
\label{abinv2}
\ea
\ee
where the second equality follows from the application of the Taylor expansion
in the reverse order. By the cofactor formula of the matrix inverse and the
so-called big formula for the matrix determinant~\cite{strang1993}, both
sides of \eqref{abinv2} are rational functions of $\lambda$.
Since these two rational functions are equal in a small neighborhood of the origin
in the complex plane, they must be equal everywhere in the whole complex plane
by analytic continuation \cite{ahlfors1966}.
Setting $\lambda=1$ in \eqref{abinv2}, we establish \eqref{rectinversion} and the
invertibility of $({\bf U} {\bf A}^{-1}{\bf V})^{-1}+{\bf B}$ simultaneously.
\eqref{rectinversion2} can be proved in an almost identical manner.
\end{proof}
\begin{figure}[t]
\centering 
\includegraphics[height=50mm]{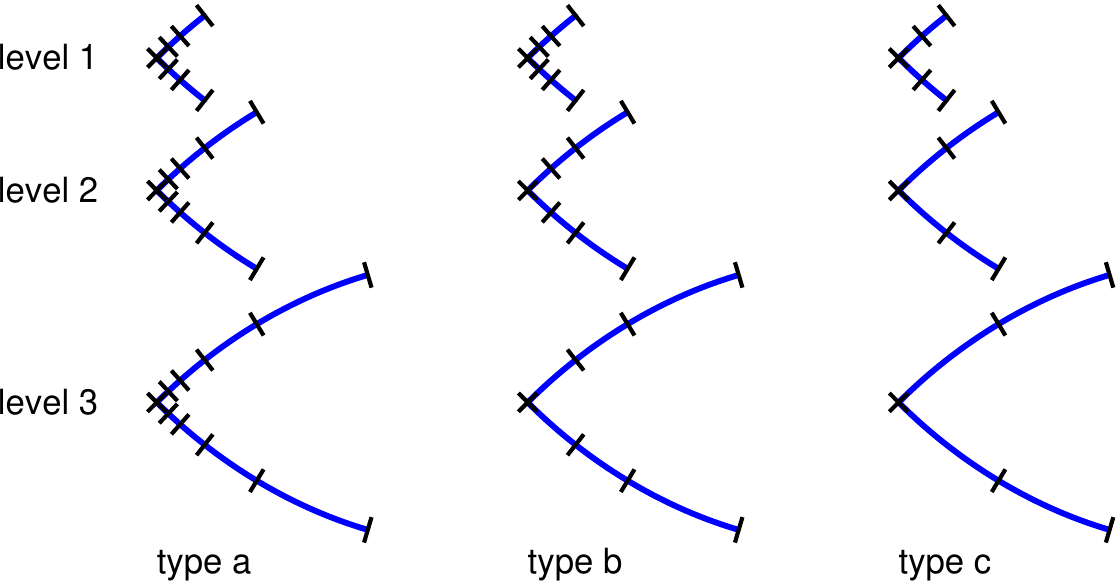}
\caption{\sf Meshes of type {\sf a}, type {\sf b}, and type {\sf c} at different levels.
  The type {\sf a} mesh contains $4+2i$ dyadic fine panels at level $i$.
  The type {\sf b} mesh always contains six panels, and the type {\sf c} mesh
  always contains four panels. At level $1$, the type {\sf a} mesh is identical
  to the type {\sf b} mesh. At level $\nsub$, the type {\sf c} mesh contains
  four coarse panels on $\Gamma^\star$ in Figure~\ref{fig:sno2}.}
\label{fig:3mesh}
\end{figure}
We now recall the definition of ${\bf R}_f$ in \eqref{eq:Rf}
\begin{equation}
{\bf R}_f=
{\bf P}_W^{\rm T}
\left({\bf I}_{\rm fin}+{\bf K}_{\rm fin}^\star\right)^{-1}
{\bf P}_f\,,
\label{rfdef}
\end{equation}
where ${\bf K}_{\rm fin}^\star$ is the system matrix built on a fine mesh
that is obtained via $\nsub$ level of dyadic refinement of the four
coarsest panels (with two panels on each side) near the singular point.
Let $\ngl$ be the number of Gauss-Legendre nodes on each panel. Then
the total number of discretization points on the fine mesh is $\ngl (4+2\nsub)$.
Clearly, direct application of $\eqref{rfdef}$ is very expensive, inaccurate,
and unrobust. Instead, the forward recursion is used to compute
$\bR_f$. The forward recursion starts from the finest six panels around
the singular point at level $i=1$, adds one panel on each side as the level
goes up, and reaches the full fine mesh at level $i=\nsub$. See Figure~\ref{fig:3mesh}
for an illustration of three different types of meshes at different levels,
where the type {\sf a} mesh is needed only in the derivation of the forward recursion.
The actual recursion formula involves only type {\sf b} and type {\sf c} meshes.
To be more precise, at any step in the forward recursion,
one only needs to build a system matrix of size $6\ngl \times 6\ngl$,
i.e., on the six panels of a type {\sf b} mesh, and the type {\sf c} mesh
is used only implicitly in the construction of the prolongation matrix
$\bP$ and its weighted version $\bP_W$. 
Similar to \cite[Eq.~(D.1)]{Hels18}, we define
\be
   {\bf R}_{fi}\coloneqq {\bf P}_{Wi{\rm ac}}^T
   ({\bf I}_{i{\rm a}}+{\bf K}_{i{\rm a}})^{-1}
   {\bf P}_{fi{\rm ac}}, \qquad i=1,\cdots, \nsub.
\ee
By the action of ${\bf P}_{fi{\rm ac}}$ and ${\bf P}_{Wi{\rm ac}}^T$, $\bR_{fi}$ is
always a matrix of size $4\ngl \times 4\ngl$ at any level $i$.
A derivation similar to that of \cite[Eq.~(D.6)]{Hels18} leads to
\be
{\bf R}_{fi}={\bf P}_{W{\rm bc}}^T{\bf P}_{Wi{\rm ab}}^T
({\bf I}_{i{\rm a}}+ \mathbb{F}\{{\bf K}_{(i-1){\rm a}}\}
+{\bf P}_{i{\rm ab}}{\bf K}_{i{\rm b}}^\circ{\bf P}_{Wi{\rm ab}}^T
)^{-1}
{\bf P}_{fi{\rm ab}}{\bf P}_{fi{\rm bc}},
\label{a6}
\ee
where we have assumed that the low-rank property
\be
\bK_{i{\rm a}}^\circ
={\bf P}_{i{\rm ab}}{\bf K}_{i{\rm b}}^\circ{\bf P}_{Wi{\rm ab}}^T
\label{lowrank}
\ee
holds to machine precision, as in \cite[Eq.~(D.3)]{Hels18}.
Since the type {\sf a} mesh differs from the type {\sf b} mesh
only on the two panels (i.e., $\Gamma_{i{\rm b}}^{\star\star}$)
closest to the singular point in the type {\sf b} mesh,
the only diagonal blocks in $\bP_{i{\rm ab}}$ and $\bP_{Wi{\rm ab}}$ 
that are not identity matrices
are $\bP_{i{\rm ab}}({\rm I}_{i{\rm a}}^{\star\star},{\rm I}_{i{\rm b}}^{\star\star})$
and $\bP_{Wi{\rm ab}}({\rm I}_{i{\rm a}}^{\star\star},{\rm I}_{i{\rm b}}^{\star\star})$,
respectively. Here ${\rm I}_{i{\rm a}}^{\star\star}$ and ${\rm I}_{i{\rm b}}^{\star\star}$
contain indices corresponding to discretization points in
$\Gamma_{i{\rm b}}^{\star\star}$ on type {\sf a} and type {\sf b} meshes,
respectively. Now it is straightforward to verify that \eqref{lowrank}
is equivalent to
\be
\ba
\bK_{i{\rm a}}({\rm I}_{i{\rm a}}^{\star\star},{\rm I}_{i{\rm b}}^{\circ})
&=\bP_{i{\rm ab}}({\rm I}_{i{\rm a}}^{\star\star},{\rm I}_{i{\rm b}}^{\star\star})
\bK_{i{\rm b}}({\rm I}_{i{\rm b}}^{\star\star},{\rm I}_{i{\rm b}}^{\circ})\,,\\
\bK_{i{\rm a}}({\rm I}_{i{\rm b}}^{\circ},{\rm I}_{i{\rm a}}^{\star\star})
&=\bK_{i{\rm b}}({\rm I}_{i{\rm b}}^{\circ},{\rm I}_{i{\rm b}}^{\star\star})
\bP_{Wi{\rm ab}}^T({\rm I}_{i{\rm b}}^{\star\star},{\rm I}_{i{\rm a}}^{\star\star})\,,
\label{lowrank2}
\ea
\ee
where ${\rm I}_{i{\rm b}}^{\circ}$ contains indices corresponding to
discretization points on $\Gamma_{i{\rm b}}^{\circ}$, that is, on the
two outermost panels from $\gamma$. Since $\Gamma_{i{\rm b}}^{\star\star}$
is always well-separated from $\Gamma_{i{\rm b}}^{\circ}$ for any
level $i$, \eqref{lowrank2} is equivalent to
stating that the kernel function $K(r,r')$ is smooth when
$r\in \Gamma_{i{\rm b}}^{\star\star}$ and $r'\in \Gamma_{i{\rm b}}^{\circ}$,
or vice versa, and thus the interaction between
$\Gamma_{i{\rm b}}^{\star\star}$ and $\Gamma_{i{\rm b}}^{\circ}$ can
be discretized to machine precision with $\ngl$ points on each panel
in $\Gamma_{i{\rm b}}^{\star\star}$ provided that $\ngl$ is not too
small. This holds for all kernels we have encountered in practice,
including, say, highly oscillatory ones, if the coarse mesh is chosen
in such a way that the oscillations of the kernel are well-resolved.
We emphasize that this is a property of the kernel function $K(r,r')$
and the type {\sf a} and {\sf b} meshes. It has nothing to do with
the singularity of the right-hand side $f$.

Applying \eqref{rectinversion2} to part of the right-hand side
of \eqref{a6}, we obtain
\be
\ba
&{\bf P}_{Wi{\rm ab}}^T
({\bf I}_{i{\rm a}}+ \mathbb{F}\{{\bf K}_{(i-1){\rm a}}\}+{\bf P}_{i{\rm ab}}
{\bf K}_{i{\rm b}}^\circ{\bf P}_{Wi{\rm ab}}^T)^{-1}=\\
&\left[
  \left({\bf P}_{Wi{\rm ab}}^T({\bf I}_{i{\rm a}}+\mathbb{F}\{{\bf K}_{(i-1){\rm a}}\})^{-1}
  {\bf P}_{i{\rm ab}}\right)^{-1}
  +{\bf K}_{i{\rm b}}^\circ\right]^{-1}\\
& \cdot \left({\bf P}_{Wi{\rm ab}}^T({\bf I}_{i{\rm a}}+\mathbb{F}\{{\bf K}_{(i-1){\rm a}}\})^{-1}
  {\bf P}_{i{\rm ab}}\right)^{-1}
  {\bf P}_{Wi{\rm ab}}^T({\bf I}_{i{\rm a}}+\mathbb{F}\{{\bf K}_{(i-1){\rm a}}\})^{-1}.
\label{a7}
\ea
\ee
Recall that \cite[Eq.~(D.10)]{Hels18} states that
\be
\mathbb{F}\{{\bf R}_{i-1}\}+{\bf I}_{\rm b}^\circ =
{\bf P}_{Wi{\rm ab}}^T
({\bf I}_{i{\rm a}}+\mathbb{F}\{{\bf K}_{(i-1){\rm a}}\})^{-1}
{\bf P}_{i{\rm ab}}.
\label{a8}
\ee
\begin{figure}[t]
\centering 
\includegraphics[height=33mm]{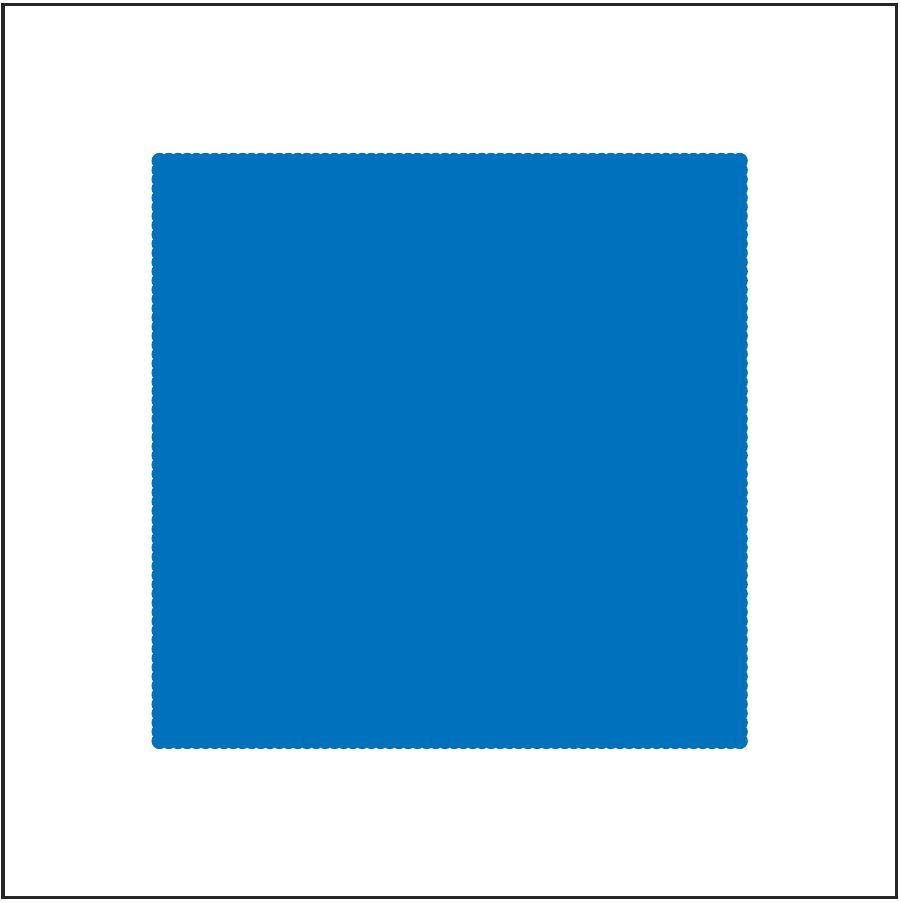}
\hspace{3mm}
\includegraphics[height=33mm]{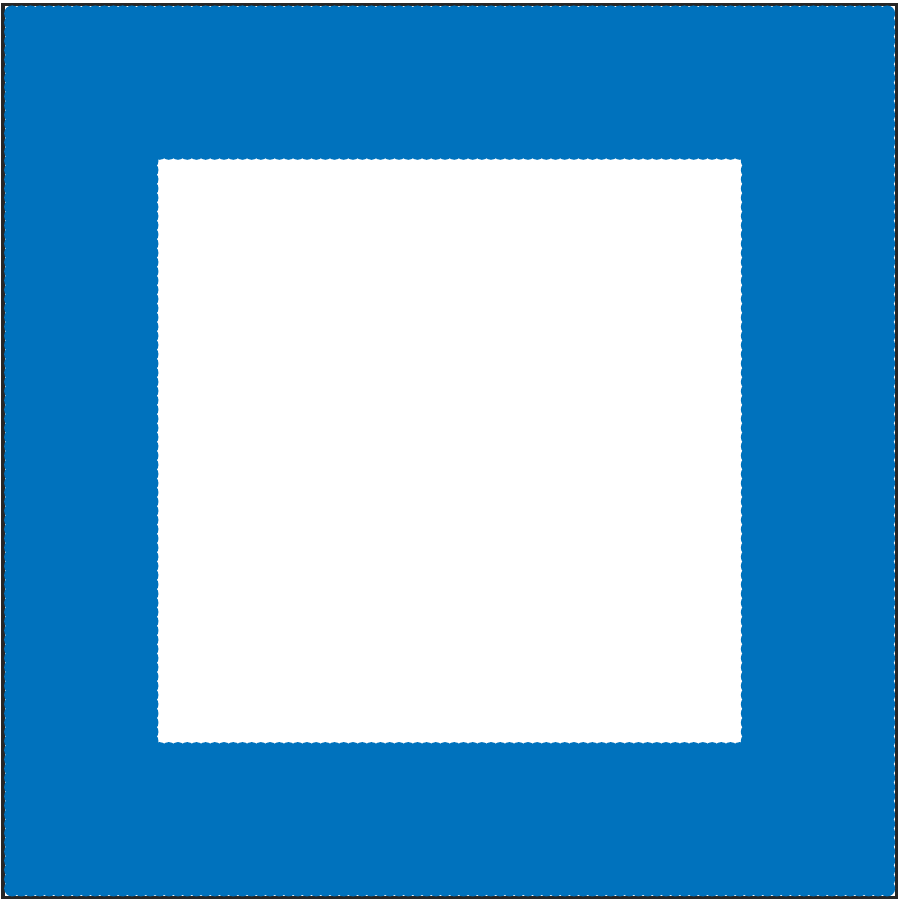}
\hspace{3mm}
\includegraphics[height=33mm]{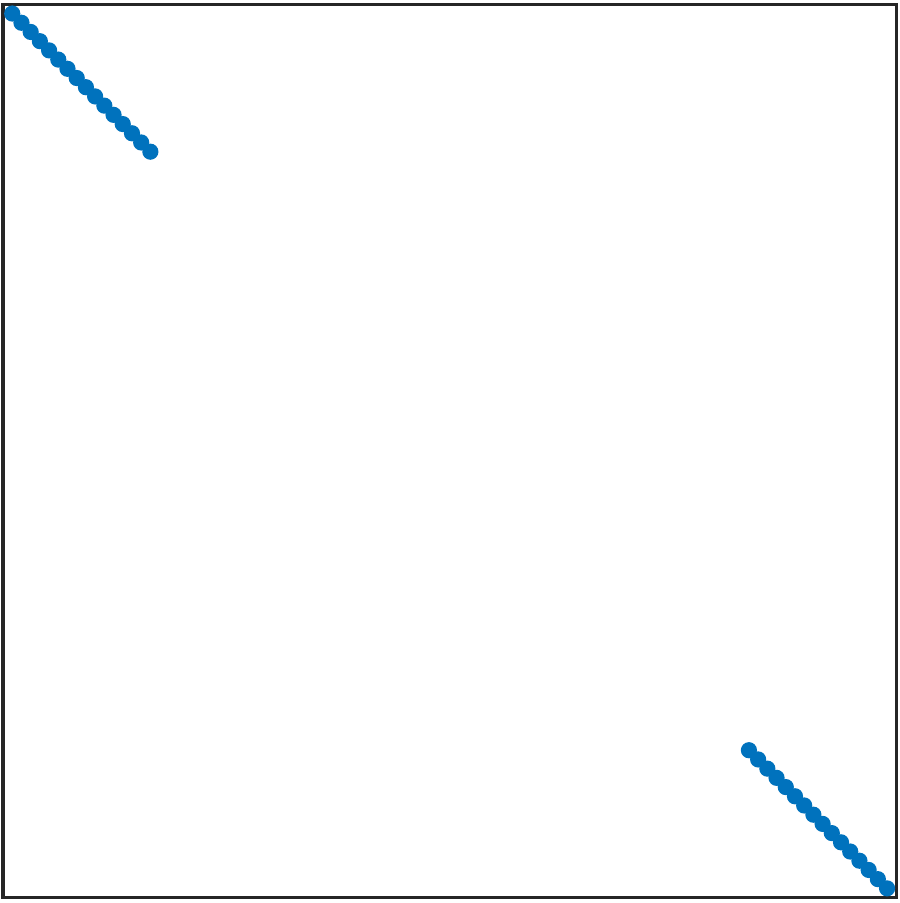}
\caption{\sf Nonzero patterns of $\mathbb{F}\{{\bf R}_{i-1}^{-1}\}$
  and $\mathbb{F}\{{\bf R}_{f(i-1)}\}$
  (left), ${\bf K}_{i{\rm b}}^\circ$ (center), and ${\bf I}_{\rm b}^\circ$ (right).
  Note that the patterns depend on the fact that when constructing
  the system matrix in the forward recursion,
  the sources and targets are arranged in the order from the top panel
  to the bottom panel on a type {\sf b} mesh, see Figure~\ref{fig:3mesh}.
}
\label{fig:sparsepattern}
\end{figure}
We observe that $\mathbb{F}\{{\bf R}_{i-1}\}$ places ${\bf R}_{i-1}$
in the center $4\ngl \times 4\ngl$ block with zero padding
in a $6\ngl\times 6\ngl$ matrix, and the nonzero blocks
of ${\bf I}_{\rm b}^\circ$ are the identity matrix of size
$\ngl$ in the top and bottom diagonal blocks (Figure~\ref{fig:sparsepattern}).
Thus,
\be
\ba
\mathbb{F}\{{\bf R}_{i-1}^{-1}\}+{\bf I}_{\rm b}^\circ &=
\left(\mathbb{F}\{{\bf R}_{i-1}\}+{\bf I}_{\rm b}^\circ\right)^{-1}\\
&=
\left({\bf P}_{Wi{\rm ab}}^T
({\bf I}_{i{\rm a}}+\mathbb{F}\{{\bf K}_{(i-1){\rm a}}\})^{-1}
{\bf P}_{i{\rm ab}}\right)^{-1}.
\label{a9}
\ea
\ee
Similarly,
\be
\ba
\mathbb{F}\{{\bf R}_{f(i-1)}\}&=
\mathbb{F}\{{\bf P}_{W(i-1){\rm ac}}^T({\bf I}_{(i-1){\rm a}}+{\bf K}_{(i-1){\rm a}})^{-1}
{\bf P}_{f(i-1){\rm ac}}\}\\
&=
{\bf P}_{Wi{\rm ab}}^T
({\bf I}_{i{\rm a}}+\mathbb{F}\{{\bf K}_{(i-1){\rm a}}\})^{-1}
{\bf P}_{fi{\rm ab}}
-{\bf I}_{\rm b}^\circ,
\label{a10}
\ea
\ee
where the second equality uses
$\left({\bf P}_{Wi{\rm ab}}^T{\bf P}_{fi{\rm ab}}\right)^\circ={\bf I}_{\rm b}^\circ$.
Combining \eqref{a6}, \eqref{a7}--\eqref{a10}, we obtain
\be
{\bf R}_{fi}={\bf P}_{W{\rm bc}}^T
\left(\mathbb{F}\{{\bf R}_{i-1}^{-1}\}+{\bf I}_{\rm b}^\circ+
{\bf K}_{i{\rm b}}^\circ\right)^{-1}
\left(\mathbb{F}\{{\bf R}_{i-1}^{-1}\}+{\bf I}_{\rm b}^\circ\right)
\left(\mathbb{F}\{{\bf R}_{f(i-1)}\}+{\bf I}_{\rm b}^\circ\right)
{\bf P}_{fi{\rm bc}}.
\label{a11}
\ee
Finally, we arrive at \eqref{eq:finalRf} by observing that
\be
\left(\mathbb{F}\{{\bf R}_{i-1}^{-1}\}+{\bf I}_{\rm b}^\circ\right)
\left(\mathbb{F}\{{\bf R}_{f(i-1)}\}+{\bf I}_{\rm b}^\circ\right)
=\mathbb{F}\{{\bf R}_{i-1}^{-1}{\bf R}_{f(i-1)}\}+{\bf I}_{\rm b}^\circ,
\ee
due to the nonzero patterns of the involved matrices
shown in Figure~\ref{fig:sparsepattern}.

\section{A synopsis of the RCIP demo codes}
\label{sec:Frecursiondetail}
On the website \url{http://www.maths.lth.se/na/staff/helsing/Tutor/},
maintained by the first author, a set of {\sc Matlab} demo codes are
posted so that researchers who are interested in the RCIP method can
download and run them and gain first-hand experience about the
method's efficiency, accuracy, and robustness for a few chosen
problems. The codes are written so that they can easily be modified to
solve other problems. The reader is expected to read the tutorial~\cite{Hels18}
in tandem with looking at the codes. Many demo codes use the
one-corner contour $\Gamma$ in Figure~\ref{fig:sno2}, which is
parameterized by $s\in [0,1]$ with $s=0$ corresponding to the corner
at the origin, and going back to the corner again in the
counterclockwise direction as $s$ increases to $s=1$. Most demo codes
consist of the following building blocks or functions:
\begin{itemize}
\item Boundary discretization functions. These include the
   discretization in parameter space using panel division and functions
   returning $z(s)$, $z'(s)$, $z''(s)$ in complex form for a given
   parameter value $s$, e.g., {\tt zfunc}, {\tt zpfunc}, {\tt zppfunc},
   {\tt zinit}, {\tt panelinit}.
\item Integral operator discretization functions. These include
   splitting of the kernel into various parts -- smooth part,
   logarithmically singular part, Cauchy singular part, hypersingular
   part, etc., corrections for singular and nearly singular integrals
   (explicit kernel-split quadrature), e.g., {\tt LogCinit}, {\tt
     WfrakLinit}, {\tt wLCinit}, {\tt StargClose}, {\tt KtargClose}.
\item System matrix construction functions such as {\tt MAinit}, {\tt
     MRinit}, {\tt Soperinit}, {\tt Koperinit}.
\item Functions for the forward recursion formula for computing the
   nontrivial block of $\bR$. These include {\tt zlocinit}, {\tt
     Rcomp}, {\tt SchurBana}.
\item Other utility functions such as {\tt myGMRES}, {\tt Tinit16},
   {\tt Winit16} that are self-explanatory.
\end{itemize}

We now provide some comments on the forward recursion functions. In
the discussions below, $\ngl=16$, as in most demo codes. The function
{\tt zlocinit} returns a discretization on the type {\sf b} mesh at
level $i$, where the points are always arranged in the order from the
top panel to the bottom panel. Another important feature is that in
{\tt zlocinit}, the singular points $\gamma_j$ is translated to the
origin to reduce the effect of round-off error. See Figure~\ref{figb1}
for detailed comments on {\tt zlocinit}.

Recall from~(\ref{eq:finalR}) that the forward recursion formula for
$\bR$ is
\begin{equation}
{\bf R}_i={\bf P}^T_{W\rm{bc}}
\left(
\mathbb{F}\{{\bf R}_{i-1}^{-1}\}+{\bf I}_{\rm b}^\circ+{\bf K}_{i{\rm
     b}}^\circ\right)^{-1}{\bf P}_{\rm{bc}}\,,
\quad i=1,\ldots,n_{\rm sub}\,.
\label{eq:Rrecursion}
\end{equation}
Let $\bM_i\coloneqq {\bf I}_{\rm b}+{\bf K}_{i{\rm b}}$
and $\bM_i^\circ\coloneqq {\bf I}_{\rm b}^\circ+{\bf K}_{i{\rm b}}^\circ$.
Then Figure~\ref{fig:sparsepattern} shows that there is no overlap in
nonzero entries of $\bM_i^\circ$ and $\bR_{i-1}$. Direct implementation
of \eqref{eq:Rrecursion} is shown in Algorithm~\ref{alg1}.

\begin{algorithm}[H]
  \caption[Caption]{\label{alg1} Forward recursion via direct implementation
    of \eqref{eq:Rrecursion}.}
  \begin{algorithmic}[1]
    \For{$i=1,\ldots,\nsub$}    
    \State Obtain 96 discretization points on the type {\sf b} mesh at level $i$.
    \State Construct $96\times 96$ system matrix $\bM_i$ at level $i$.
    \If{$i==1$}
    \State $\bR_0=\bM_1^{\star-1}$. That is, $\bR_0$ is the inverse of the center
    $64\times 64$\\ \hspace{0.5in} block of $\bM_1$.
    \EndIf
    \State Compute $\bR_{i-1}^{-1}$.
    \State Replace the center $64\times 64$ block of $\bM_i$ with $\bR_{i-1}^{-1}$
    to obtain $\widetilde{\bM}_i$.
    \State Compute $\widetilde{\bM}_i^{-1}$.
    \State Compress $\widetilde{\bM}_i^{-1}$ to obtain $\bR_i$. To be more precise,
    $\bR_i={\bf P}^T_{W\rm{bc}}\widetilde{\bM}_i^{-1}{\bf P}_{\rm{bc}}$.
    \EndFor
    \State Return the nontrivial $64\times 64$ block of $\bR$, i.e., $\bR_{\nsub}$.
  \end{algorithmic}
\end{algorithm}

Algorithm~\ref{alg1} needs to invert two matrices, i.e.,
$\bR_{i-1}$ of size $64\times 64$ and $\widetilde{\bM}_i$ of size $96\times 96$.
This may lead to loss of accuracy and even instability when
$\bR_{i-1}$ is ill-conditioned. As pointed out in Section~\ref{sec:improvement},
the algorithm can be improved via the block matrix inversion formula that
avoids explicit inversion of $\bR_{i-1}$. Indeed, after column and row
permutations, we obtain the following block structure
(Figure~\ref{fig:Rpermutation})
\be
\mathbb{F}\{{\bf R}_{i-1}^{-1}\}+{\bf I}_{\rm b}^\circ+{\bf K}_{i{\rm
     b}}^\circ \longrightarrow
\left[
\begin{array}{c c}
      \bA^{-1}
& \bU \\
   \bV & \bD
  \end{array}
\right]\,.
\ee
\begin{figure}[t]
\centering
\includegraphics[height=33mm]{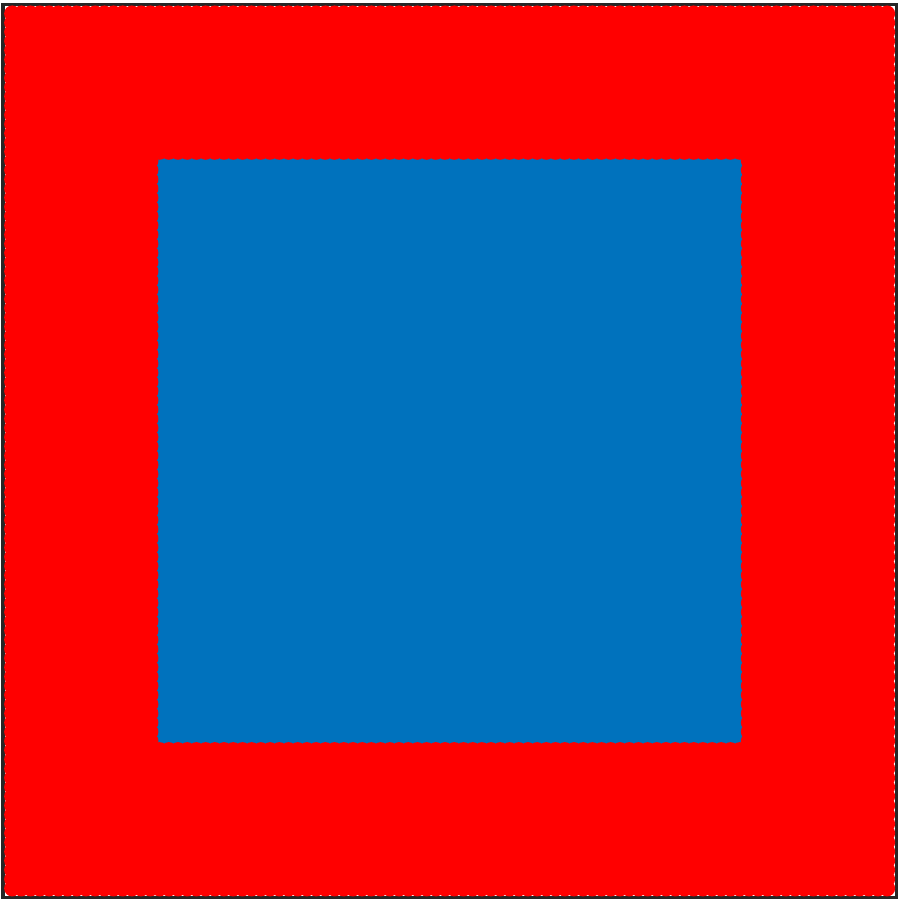}
\hspace{10mm}
\includegraphics[height=33mm]{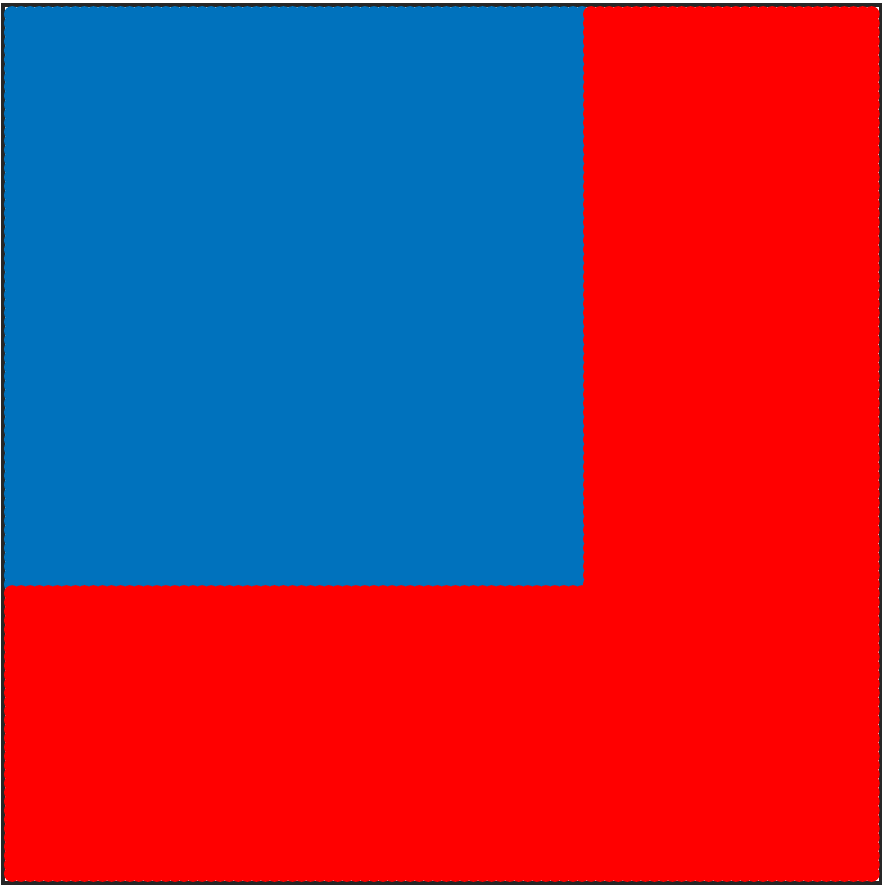}
\caption{\sf Column/row permutation converting
   $\mathbb{F}\{{\bf R}_{i-1}^{-1}\}+{\bf I}_{\rm b}^\circ+{\bf
     K}_{i{\rm b}}^\circ$ into a $2\times 2$ block matrix. Blue block
   is $\bR_{i-1}^{-1}$ and red block is ${\bf I}_{\rm b}^\circ+{\bf
     K}_{i{\rm b}}^\circ$. Left: the original matrix $\mathbb{F}\{{\bf
     R}_{i-1}^{-1}\}+{\bf I}_{\rm b}^\circ+{\bf K}_{i{\rm b}}^\circ$.
   Right: the matrix after column/row permutation.  }
\label{fig:Rpermutation}
\end{figure}

However, there is no need to carry out column/row permutation
explicitly. {\sc Matlab}'s column/row indexing conveniently achieves
this purpose without actual permutation. To be more precise,
$\bA=\bR_{i-1}$ is a $64\times 64$ block, $\bD=\bM_i^\circ({\tt circL}, {\tt
   circL})$ is a $32\times 32$ block, $\bU=\bM_i^\circ({\tt starL}, {\tt
   circL})$ is a $64\times 32$ block, and $\bV=\bM_i^\circ({\tt circL}, {\tt
   starL})$ is a $32\times 64$ block, where {\tt circL}=[1:16\, 81:96]
and {\tt starL}=17:80.  And \cite[Eq.~(31)]{Hels18} follows from the
following block matrix inversion formula \be
\begin{bmatrix} \bA^{-1} & \bU \\ \bV & \bD \end{bmatrix}^{-1}
=\begin{bmatrix} \bA+\bA\bU(\bD-\bV\bA\bU)^{-1}\bV\bA &
-\bA\bU(\bD-\bV\bA\bU)^{-1}\\
-(\bD-\bV\bA\bU)^{-1}\bV\bA & (\bD-\bV\bA\bU)^{-1}\end{bmatrix},
\ee
which can be verified via direct matrix multiplication. Thus, in order
to achieve better efficiency and stability, Algorithm~\ref{alg1}
is replaced by function~{\tt Rcomp} (Figure~\ref{figb2}) that
contains the main loop for the forward recursion, and function~{\tt SchurBana}
(Figure~\ref{figb3}) that replaces steps 8--11 in Algorithm~\ref{alg1}
by \cite[Eq.~(31)]{Hels18}. We note that one only needs to invert a
$32 \times 32$ matrix in function~{\tt SchurBana}.

\begin{figure}[!ht]
\centering
\begin{lstlisting}
   function [z,zp,zpp,nz,w,wzp]=zlocinit(theta,T,W,nsub,i,npan)
   % This function returns to the user the discretization of
   % the type b mesh at the ith level
   %
   % Input parameters:
   % theta  - the parameter for the one-corner curve
   % T, W   - Gauss-Legendre nodes and weights on [-1,1]
   % nsub   - the total number of dyadic refinement in the
   %          calculation of the preconditioner R
   % i      - the level index
   % npan   - the total number of coarse panels
   %
   % Output parameters:
   % z,zp,zpp,nz,w,wzp - complex column vectors of length 96
   % z      - coordinates of discretization points (in complex
   %          number format) from the top panel to the bottom panel
   % zp,zpp - z'(s), z''(s)
   % nz     - unit normal vector
   % w      - quadrature weights
   % wzp    - w*z'(s)

   % 1/npan is the length of the coarse panel in the parameter space
   % h is the length of each panel on the type c mesh at level i
   h=1/npan/2^(nsub-i);

   % s returns discretization points in the parameter space for
   % the bottom 3 panels in the type b mesh at level i.
   % That is, s contains scaled and shifted Gauss-Legendre nodes on
   % the three panels [0, 0.5h], [0.5h, h], [h, 2h].
   s=[T/4+0.25;T/4+0.75;T/2+1.5]*h;
   w=[W/4;W/4;W/2]*h; w=[flipud(w);w];

   z=zfunc(s,theta);
   % conj(flipud(z)) produces points on the top three panels of
   % the type b mesh by symmetry.
   z=[conj(flipud(z));z];

   zp=zpfunc(s,theta);zp=[-conj(flipud(zp));zp];
   zpp=zppfunc(s,theta);zpp=[conj(flipud(zpp));zpp];
   nz=-1i*zp./abs(zp);
   wzp=w.*zp;
\end{lstlisting}
\caption{\sf {\sc Matlab} function {\tt zlocinit}.
}
\label{figb1}
\end{figure}

\begin{figure}[!ht]
\centering
\begin{lstlisting}
   function R=Rcomp(theta,lambda,T,W,Pbc,PWbc,nsub,npan)
   % Forward recursion for computing the nontrivial 64x64 block
   % of the preconditioner R

   % indices for the center four panels on the type b mesh
   starL=17:80;
   % indices for the top and bottom panels on the type b mesh
   circL=[1:16 81:96];

   % forward recursion - level 1 is the finest level,
   % level nsub goes back to four coarse panels near the corner
   for level=1:nsub
     % discretization on the type b mesh at the current level
     [z,zp,zpp,nz,w,wzp]=zlocinit(theta,T,W,nsub,level,npan);
     % construct the interaction matrix on the type b mesh
     K=MAinit(z,zp,zpp,nz,w,wzp,96);
     MAT=eye(96)+lambda*K; % the full system matrix
     % at the finest level, R is initialized by simply inverting
     % the "bad" part of the system matrix
     if level==1
       R=inv(MAT(starL,starL));
     end
     % use block inversion formula to carry out the forward recursion
     R=SchurBana(Pbc,PWbc,MAT,R,starL,circL);
   end
\end{lstlisting}
\caption{\sf {\sc Matlab} function {\tt Rcomp}.
   }
\label{figb2}
\end{figure}

\begin{figure}[!ht]
\centering
\begin{lstlisting}
   function A=SchurBana(P,PW,K,A,starL,circL)
   % This function uses the block matrix inversion formula
   % to compute the forward recursion for the nontrivial
   % 64x64 block of the preconditioner R.
   %
   % Input parameters:
   % P     - 96x64 prolongation matrix
   % PW    - 96x64 weighted prolongation matrix
   % A     - 64x64 preconditioner matrix R_{i-1} at level i-1
   % starL - 17:80 center four panels on the type b mesh
   % circL - [1:16 81:96] top and bottom panels on the type b mesh
   %
   % Output parameters:
   % A     - 64x64 preconditioner matrix R_{i} at level i
   starS=17:48; % center two panels on the type c mesh
   circS=[1:16 49:64]; % top and bottom panels on the type c mesh

   % Using the notation in Eq. (31) in the RCIP tutorial, we have
   % V = K(circL, starL);
   % U = K(starL, circL);
   % D = K(circL, circL);
   % A = R_{i-1};

   VA=K(circL,starL)*A; % V*A
   PTA=PW(starL,starS)'*A; % P_W^T * A
   PTAU=PTA*K(starL,circL); % P_W^T * A * U
   DVAUI=inv(K(circL,circL)-VA*K(starL,circL)); % (D-V A U)^{-1}
   DVAUIVAP=DVAUI*(VA*P(starL,starS)); % (D-V A U)^{-1} * (V A P)

   A(starS,starS)=PTA*P(starL,starS)+PTAU*DVAUIVAP; % (1,1) block
   A(circS,circS)=DVAUI; % (2,2) block
   A(circS,starS)=-DVAUIVAP; % (2,1) block
   A(starS,circS)=-PTAU*DVAUI; % (1,2) block
\end{lstlisting}
\caption{\sf {\sc Matlab} function {\tt SchurBana}.
}
\label{figb3}
\end{figure}

\bibliographystyle{abbrv}
\bibliography{srhs}

\begin{thebibliography}{10}

\bibitem{abram53}
M.~Abramowitz.
\newblock Evaluation of the integral $\int_0^{\infty}e^{-u^2-x/u}du$.
\newblock {\em J. Math. Phys. Camb.}, 32:188--192, 1953.

\bibitem{klinteberg19}
L.~af~Klinteberg, F.~Fryklund, and A.-K. Tornberg.
\newblock An adaptive kernel-split quadrature method for parameter-dependent
  layer potentials.
\newblock {\em arXiv preprint arXiv:1906.07713}, 2019.

\bibitem{ahlfors1966}
L.~V. Ahlfors.
\newblock {\em Complex analysis: an introduction to the theory of analytic
  functions of one complex variable}.
\newblock McGraw-Hill, New York, 1966.

\bibitem{AskhGree14}
T.~Askham and L.~Greengard.
\newblock Norm-preserving discretization of integral equations for elliptic
  {PDE}s with internal layers {I}: the one-dimensional case.
\newblock {\em SIAM Rev.}, 56(4):625--641, 2014.

\bibitem{bgk1954}
P.~L. Bhatnagar, E.~P. Gross, and M.~Krook.
\newblock A model for collision processes in gases. {I}. {S}mall amplitude
  processes in charged and neutral one-component systems.
\newblock {\em Phys. Rev.}, 94(3):511--525, 1954.

\bibitem{Brem12}
J.~Bremer.
\newblock On the {N}ystr{\"o}m discretization of integral equations on planar
  curves with corners.
\newblock {\em Appl. Comput. Harmon. Anal.}, 32(1):45--64, 2012.

\bibitem{Cercignani2000}
C.~Cercignani.
\newblock {\em Rarefied Gas Dynamics: From Basic Concepts to Actual
  Calculations}.
\newblock Cambridge University Press, Cambridge, UK, 2000.

\bibitem{cheng2006cm}
H.~Cheng, W.~Crutchfield, Z.~Gimbutas, L.~Greengard, J.~Huang, V.~Rokhlin,
  N.~Yarvin, and J.~Zhao.
\newblock Remarks on the implementation of the wideband {FMM} for the
  {H}elmholtz equation in two dimensions.
\newblock In {\em Inverse problems, multi-scale analysis and effective medium
  theory}, volume 408 of {\em Contemp. Math.}, pages 99--110. Amer. Math. Soc.,
  Providence, RI, 2006.

\bibitem{colton2012}
D.~Colton and R.~Kress.
\newblock {\em Inverse {A}coustic and {E}lectromagnetic {S}cattering {T}heory}.
\newblock Springer, New York, NY, 2012.

\bibitem{Devi18}
Y.~U. Devi, M.~Rukmini, and B.~Madhav.
\newblock A compact conformal printed dipole antenna for 5{G} based vehicular
  communication applications.
\newblock {\em Prog. Electromagn. Res. C}, 85:191--208, 2018.

\bibitem{ethridge2001sisc}
F.~Ethridge and L.~Greengard.
\newblock A new fast-multipole accelerated {P}oisson solver in two dimensions.
\newblock {\em SIAM J. Sci. Comput.}, 23(3):741--760, 2001.

\bibitem{gimbutas2021siamam}
Z.~Gimbutas.
\newblock Edge and corner preconditioners in three dimensions.
\newblock {SIAM} {A}nnual {M}eeting, 2021.

\bibitem{Gimbutas20}
Z.~Gimbutas, S.~Jiang, and L.-S. Luo.
\newblock Evaluation of {A}bramowitz functions in the right half of the complex
  plane.
\newblock {\em J. Comput. Phys.}, 405:109169, 2020.

\bibitem{greengard1987jcp}
L.~Greengard and V.~Rokhlin.
\newblock A fast algorithm for particle simulations.
\newblock {\em J. Comput. Phys.}, 73(2):325--348, 1987.

\bibitem{Hels09JCP}
J.~Helsing.
\newblock Integral equation methods for elliptic problems with boundary
  conditions of mixed type.
\newblock {\em J. Comput. Phys.}, 228(23):8892--8907, 2009.

\bibitem{Hels18}
J.~Helsing.
\newblock Solving integral equations on piecewise smooth boundaries using the
  {RCIP} method: a tutorial.
\newblock {\em arXiv preprint arXiv:1207.6737v9}, 2018.

\bibitem{HelsJian18}
J.~Helsing and S.~Jiang.
\newblock On integral equation methods for the first {D}irichlet problem of the
  biharmonic and modified biharmonic equations in nonsmooth domains.
\newblock {\em SIAM J. Sci. Comput.}, 40(4):A2609--A2630, 2018.

\bibitem{helsing2014jcp}
J.~Helsing and A.~Karlsson.
\newblock An explicit kernel-split panel-based {N}ystr{\"o}m scheme for
  integral equations on axially symmetric surfaces.
\newblock {\em J. Comput. Phys.}, 272:686--703, 2014.

\bibitem{Hels08}
J.~Helsing and R.~Ojala.
\newblock On the evaluation of layer potentials close to their sources.
\newblock {\em J. Comput. Phys.}, 227(5):2899--2921, 2008.

\bibitem{helsing2013acha}
J.~Helsing and K.-M. Perfekt.
\newblock On the polarizability and capacitance of the cube.
\newblock {\em Appl. Comput. Harmon. Anal.}, 34(3):445--468, 2013.

\bibitem{Hend81}
H.~V. Henderson and S.~R. Searle.
\newblock On deriving the inverse of a sum of matrices.
\newblock {\em SIAM Rev.}, 23(1):53--60, 1981.

\bibitem{High96}
N.~J. Higham.
\newblock {\em Accuracy and stability of numerical algorithms}.
\newblock SIAM, 2002.

\bibitem{hsiao2008}
G.~C. Hsiao and W.~L. Wendland.
\newblock {\em Boundary integral equations}, volume 164 of {\em Applied
  Mathematical Sciences}.
\newblock Springer-Verlag, Berlin, 2008.

\bibitem{Jiang16}
S.~Jiang and L.-S. Luo.
\newblock Analysis and solutions of the integral equation derived from the
  linearized {BGKW} equation for the steady {Couette} flow.
\newblock {\em J. Comput. Phys.}, 316:416--434, 2016.

\bibitem{Kaha65}
W.~Kahan.
\newblock Further remarks on reducing truncation errors.
\newblock {\em Commun. Assoc. Comput. Mach.}, 8:40, 1965.

\bibitem{kress2014}
R.~Kress.
\newblock {\em Linear Integral Equations}, volume~82 of {\em Applied
  Mathematical Sciences}.
\newblock Springer--Verlag, Berlin, third edition, 2014.

\bibitem{Li2015}
W.~Li, L.-S. Luo, and J.~Shen.
\newblock Accurate solution and approximations of the linearized {BGK} equation
  for steady {Couette} flow.
\newblock {\em Comput. Fluids}, 111:18--32, 2015.

\bibitem{Ma96}
J.~Ma, V.~Rokhlin, and S.~Wandzura.
\newblock Generalized {G}aussian quadrature rules for systems of arbitrary
  functions.
\newblock {\em SIAM J. Numer. Anal.}, 33(3):971--996, 1996.

\bibitem{macleod1992anm}
A.~J. MacLeod.
\newblock Chebyshev expansions for {A}bramowitz functions.
\newblock {\em Appl. Numer. Math.}, 10:129--137, 1992.

\bibitem{martinsson2020}
P.-G. Martinsson.
\newblock {\em Fast direct solvers for elliptic {PDE}s}, volume~96 of {\em
  CBMS-NSF Regional Conference Series in Applied Mathematics}.
\newblock Society for Industrial and Applied Mathematics (SIAM), Philadelphia,
  PA, 2020.

\bibitem{nie2001siap}
Q.~Nie and F.-R. Tian.
\newblock Singularities in {H}ele--{S}haw flows driven by a multipole.
\newblock {\em SIAM J. Appl. Math.}, 62(2):385--406, 2001.

\bibitem{pozrikidis1992}
C.~Pozrikidis.
\newblock {\em Boundary integral and singularity methods for linearized viscous
  flow}.
\newblock Cambridge Texts in Applied Mathematics. Cambridge University Press,
  Cambridge, 1992.

\bibitem{rappa17}
T.~S. {Rappaport}, G.~R. {MacCartney}, S.~{Sun}, H.~{Yan}, and S.~{Deng}.
\newblock Small-scale, local area, and transitional millimeter wave propagation
  for 5{G} communications.
\newblock {\em IEEE Trans. Antennas Propag.}, 65(12):6474--6490, 2017.

\bibitem{strang1993}
G.~Strang.
\newblock {\em Introduction to linear algebra}.
\newblock Wellesley-Cambridge Press, Wellesley, MA, 5th edition, 2016.

\bibitem{w1954}
P.~Welander.
\newblock On the temperature jump in a rarefied gas.
\newblock {\em Ark. Fys.}, 7:507--553, 1954.

\end{thebibliography}

\end{document}